\newcommand{\K}{\mathbb{K}}
\newcommand{\Z}{\mathbb{Z}}
\newcommand{\Aut}{\mathrm{Aut}}
\newcommand{\Y}{\mathrm{Y}}
\newcommand{\A}{\mathcal{A}}
\newtheorem{theorem}{Theorem}[section]
\newtheorem{cor}[theorem]{Corollary}
\newtheorem{lem}[theorem]{Lemma}
\newtheorem{prop}[theorem]{Proposition}
\theoremstyle{definition}
\newtheorem{defin}[theorem]{Definition}
\newtheorem{remark}[theorem]{Remark}
\author{Anton Trushin}
\title{Gradings allowing wild automorphisms}
\date{25.03.2021}
\address{Lomonosov Moscow State University, Faculty of Mechanics and Mathematics, Moscow, Russia}
\address{Moscow Center for Fundamental and Applied Mathematics Moscow Russia}
\subjclass[2020]{Primary 14R10, 13A02; Secondary 13N15, 13A50.}
\keywords{Tame automorphisms, wild automorphisms, graded rings, polynomial algebra.}
\email{ornkano@mail.ru}
\begin{document}
	
	\maketitle
	
	\begin{abstract}
		In 2004 Shestakov and Umirbaev proved that the Nagata automorphism of the polynomial algebra in three variables is wild. We fix a {$\Z$-grading} on this algebra and consider graded-wild automorphisms, i.e. such automorphisms that can not be decomposed onto elementary automorphisms respecting the grading. We describe all gradings allowing graded-wild automorphisms.
	\end{abstract}

	\section{Introduction}
	Let $\K$ be an algebraically closed field of characteristic zero and let $\A=\K[x_1,\ldots,x_n]$ denote the polynomial ring in $n$ variables over $\K$.
	A {\it polynomial automorphism} is a bijective homomorphism $\varphi = (f_1,\ldots,f_n): \A \rightarrow \A$ of the form
	$$\varphi(F(x_1,\ldots,x_n)) = F(f_1(x_1,\ldots,x_n),\ldots,f_n(x_1,\ldots,x_n)),$$
	where each $f_i$ belongs to $\A$. 
	The following definitions plays a crucial role in the sequel.
	\begin{defin}
		An {\it affine} automorphism of the algebra $\A$ is an automorphism $\varphi$ of the form $x_i \mapsto \sum\limits_{j = 1}^nc_{ij}x_j + b_i$. If all $b_i=0$, we call $\varphi$ {\it linear}.  
	\end{defin}
	\begin{defin}
		An automorphism of the algebra $\A$ is {\it elementary} if it has the following form $$\varphi = (x_1, \ldots, x_{i-1},\ x_i + F,\ x_{i+1}, \ldots, x_n),$$	
		where $F \in \K[x_1, \ldots, x_{i-1},x_{i+1}, \ldots, x_n]$.
	\end{defin}
	We call an automorphism $\varphi$ {\it tame} if it is the composition of elementary and linear automorphisms.
	
	\begin{remark}\label{mr}
		Each elementary automorphism can be represented as a composition of an automorphisms of the form $$\varphi = (x_1, \ldots, x_{i-1},\ \lambda x_i + M,\ x_{i+1}, \ldots, x_n),$$
		where $M \in \K[x_1, \ldots, x_{i-1},x_{i+1}, \ldots, x_n]$ is monomial.
	\end{remark}
	An automorphism is said to be {\it wild} if it is not tame.
	The problem of existence of wild automorphisms is now solved for spaces of dimension less than four.\\
	
	$\bullet$ For algebra $\K[x]$ automorphism group consists of affine automorphisms: $x \mapsto \lambda x + c$. They are all tame.\\
	
	$\bullet$ In 1942 Jung proved that the algebra $\K[x,y]$ admits no wild automorphisms \cite{b5}, see also \cite{K}. Moreover, the automorphism group is an amalgamated product of subgroups, one of which is a subgroup of affine automorphisms \cite{b1}.\\
	
	$\bullet$ In 1972 Nagata constructed the following automorphism $\sigma$ of $\mathbb{K}[x,y,z]$
	$$
	\begin{cases}
	\sigma(x)=x+(x^2-yz)z;\\
	\sigma(y)=y+2(x^2-yz)x+(x^2-yz)^2z;\\
	\sigma(z)=z.
	\end{cases}
	$$
	In 2004 Shestakov and Umirbaev  \cite{b2} proved that the Nagata automorphism gives an example of a wild automorphism for $n = 3$.
	\\
	
	But existence of a wild automorphism for $n=3$ does not imply existence of a wild automorphism for $n>3$. If we consider the automorphism of the polynomial algebra in four variables obtained by adding to the Nagata automorphism an extra stable variable, it turns out to be tame, see \cite{S}. So the Nagata automorphism does not give an example of a wild automorphism for $n=4$. The problem of existence of wild automorphisms for $n=4$ is still open. 
	
	In paper \cite{b3} tame and wild automorphisms of the algebra $\A$ with an additional $\Z$-grading is considered. Let us recall some definitions.
	Let $G$ be a commutative group. Then a \mbox{\it $G$-grading} $\varGamma$ of the algebra  $\A$ is a decomposition $\A$ into a direct sum of linear subspaces~$\A = \bigoplus\limits_{g \in G}\A_g$, such that~$\A_{g}\A_{h} \subset \A_{gh}$. Subspaces $\A_g$ are called {\it homogeneous components} and all elements of $\A_g$ are called {\it homogeneous}. If $f$ belongs to $\A_g$ we denote $\deg_{\varGamma}(f) = g$. If $G=\Z$, for an arbitrary $f = \sum\limits_{g}f_g, f_g \in \A_g$ we denote by $\deg_{\varGamma}(f)$ the maximum $g$ such that $f_g \neq 0$. We say that an automorphism $\varphi$ of the algebra $\A$ respects $G$-grading $\varGamma$ if for all $g \in G$ the image of the subspace $\A_g$ under $\varphi$ is contained in $\A_g$. Such an automorphism is called {\it graded}.
	
	In this paper we consider only such gradings $\varGamma$ that all coordinates $x_i$ are homogeneous with respect to $\varGamma$.
	
	\begin{defin}
		If a graded automorphism $\varphi$ can be decomposed to a composition of graded elementary and linear automorphisms then $\varphi$ is called {\it graded-tame}. Other graded automorphisms are called {\it graded-wild}.
	\end{defin}
	Paper \cite{b3} contains an example of a $\Z$-grading for $n=4$ which admits graded-wild automorphisms. In particular the Anick automorphism is graded-wild with respect to this grading.\\
	
	In this paper we study $\Z$-gradings of algebra $\A$ for $n$ equals 2 and 3. We show that if $n=2$, then all graded automorphisms of $\A$ are graded-tame. For $n=3$ we give a criterium for a $\Z$-grading of $\A$ to admit graded-wild automorphisms. For every grading admitting graded-wild automorphisms, we give an explicit example of a graded-wild automorphism.
	
	The author is grateful to Sergey Gaifullin and Dmitry Trushin for useful discussions and comments and to the referee for careful reading of the paper and valuable suggestions.

	\section{Polynomial algebra in two variables}
	For many subsequent reasons, we need both Jung's theorem and its proof, so we give them here.
	\begin{theorem}[Jung]\label{Jung}
		Any automorphism of the algebra $\K[x,y]$ is tame.
	\end{theorem}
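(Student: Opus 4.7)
My plan is an induction on $N = \deg f + \deg g$, where I write $\varphi = (f,g)$, reducing at each step to a pair of smaller total degree by composing with an elementary or linear automorphism. In the base case $\min(\deg f, \deg g) \leq 1$, after composing with a linear automorphism I may assume $g = y$; then the Jacobian identity $J(f,g) = f_x \in \K^{*}$ forces $f = \lambda x + P(y)$ with $\lambda \in \K^{*}$, which is already a composition of an elementary and a linear automorphism.

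For the inductive step, assume $m := \deg g \leq n := \deg f$ and $m \geq 2$, and let $\bar f$, $\bar g$ denote the top-degree homogeneous parts. Since $\varphi$ is an automorphism, the Jacobian $J(f,g)$ is a nonzero constant; its degree-$(n+m-2)$ component equals $J(\bar f, \bar g)$, which must therefore vanish. So $\bar f$ and $\bar g$ are algebraically dependent over $\K$, and since they are homogeneous in two variables, a unique-factorization argument produces a relation $\bar f^{m} = c\,\bar g^{n}$ with $c \in \K^{*}$.

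The heart of the argument, and the step I expect to be the main obstacle, is the divisibility $m \mid n$. Writing $\bar g = \beta \prod_i L_i^{b_i}$ as a product of linear forms, the identity $\bar f^m = c\,\bar g^n$ forces $\bar f = \alpha \prod_i L_i^{n b_i / m}$, so all the exponents $n b_i / m$ must be nonnegative integers. When $\gcd(b_i) = 1$ this gives $m \mid n$ immediately; the subtle case is when $\bar g = h^{e}$ is a proper power with $e > 1$. Excluding this case uses the automorphism property in an essential way: the inclusion $x, y \in \K[f, g]$ restricts how the common linear factors of $\bar f$ and $\bar g$ can distribute among the generators, and in the end forces $\bar g$ itself (rather than a proper root) to control the leading behavior of elements of $\K[f,g]$ of degree divisible by $m$.

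Once $m \mid n$ is secured, set $k = n/m$ and extract $\lambda \in \K^{*}$ with $\bar f = \lambda \bar g^{k}$. Then $\varepsilon \circ \varphi = (f - \lambda g^{k},\ g)$ for the elementary automorphism $\varepsilon : (x,y) \mapsto (x - \lambda y^{k}, y)$, and $f - \lambda g^{k}$ has degree strictly smaller than $n$ because the leading form cancels by construction. The inductive hypothesis expresses $\varepsilon \circ \varphi$ as a composition of elementary and linear automorphisms, and then so does $\varphi = \varepsilon^{-1} \circ (\varepsilon \circ \varphi)$, completing the proof.
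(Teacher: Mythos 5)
You are attempting the classical Jung--van der Kulk degree induction, which is a genuinely different route from the paper's proof (the paper follows Makar-Limanov: it turns $f$ into a locally nilpotent derivation $\delta_f = J(f,\cdot)$, analyzes the top component of $f$ with respect to weighted gradings attached to the edges of the Newton polygon, and inducts on the area of the polygon). Your skeleton --- the base case $\min(\deg f,\deg g)\leq 1$, the vanishing of $J(\bar f,\bar g)$, the relation $\bar f^{\,m}=c\,\bar g^{\,n}$, and the degree-lowering composition once $\bar f=\lambda\bar g^{\,k}$ --- is the standard and correct outline of that proof.

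However, the step you yourself flag as the main obstacle, the divisibility $m\mid n$, is not proved; it is only described. The paragraph asserting that ``the inclusion $x,y\in\K[f,g]$ restricts how the common linear factors of $\bar f$ and $\bar g$ can distribute'' is not an argument: writing $d=\gcd(m,n)$, the relation $\bar f^{\,m}=c\,\bar g^{\,n}$ only yields $\bar f=a\,h^{n/d}$ and $\bar g=b\,h^{m/d}$ for a single form $h$ of degree $d$, and when $d<m$ the naive attempt to derive a contradiction from $x=P(f,g)$ fails because the leading forms of the monomials $f^ig^j$ occurring in $P$ can cancel, so no constraint on the linear factors can be read off the top degrees directly. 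Ruling out $d<m$ is precisely where the entire content of Jung's theorem sits; every known proof spends its effort exactly here (van der Kulk's original argument, the Abhyankar--Moh style valuation analysis, or, in the paper's version, the fact that the kernel of a locally nilpotent derivation is factorially closed of transcendence degree one, which forces the top component $\widehat f$ to be $Cx^\alpha$, $Cy^\beta$ or $C(y^q-\lambda x^p)^k$). As written, your proposal reduces the theorem to an unproved lemma essentially equivalent to the theorem itself. (A minor point: with the convention $\varphi(x)=f$, the automorphism $(f-\lambda g^{k},g)$ is $\varphi\circ\varepsilon$ rather than $\varepsilon\circ\varphi$; this does not affect the tameness conclusion.)
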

	Before proving the theorem, we give some necessary definitions.
	\\
	A {\it derivation} of the algebra $\A$ is a linear mapping $\delta: \A \rightarrow \A$ satisfying the Leibniz law: $\delta(ab) = \delta(a)b + a\delta(b)$.
	A {\it locally nilpotent derivation (LND)} of the algebra $\A$ is a derivation $\delta$ such that for any $a \in \A$ there is a positive integer $m$ such that $\delta^m(a) = 0$.
	
	The {\it Jacobian matrix} for a mapping $\varphi = (f,g) \colon \K[x,y]\rightarrow \K[x,y]$ is the matrix of the partial derivatives of $\varphi$: 
	$
	\begin{pmatrix}
	\frac{\partial f}{\partial x}& \frac{\partial f}{\partial y}\\
	\frac{\partial g}{\partial x}& \frac{\partial g}{\partial y}
	\end{pmatrix}$. 
	The determinant of the Jacobian matrix for $\varphi$ is called the {\it Jacobian} of the mapping $\varphi$ and is denoted by $J(\varphi) = J(f,g)$.

	Now let us consider all monomials having nonzero coefficients in $f$ and mark them as points on the lattice~$\Z^2 $. We assotiate the point $(u,v)$ with the monomial $ \lambda x^{u} y^{v}, \lambda \neq 0 $.
	A {\it Newton polygon} $ N(f) \subset \Z^2 $ is the convex hull of the origin and all the marked points.
	
	Now we give a proof of Jung's theorem due to Makar-Limanov \cite{ML}:
	\begin{proof}[Jung's theorem proof]

		Let $\varphi = (f,g)$ be an automorphism of the algebra $\A=\K[x,y]$. Consider the mapping $\delta_f: \A\rightarrow\A$ such that $\delta_f(h) = J(f,h)$.	
		This map is linear and satisfies the Leibniz rule, so it is a derivation. Note that $ \delta_x (h) = \frac{\partial h}{\partial y} $, that is, $ \delta_x $ is an LND. Since $\varphi$ is an automorphism, we have~$\K[x,y] = \K[f,g]$. The mapping $\delta_{f}$ is a nonzero LND of the algebra~$\K[f,g]$, so $\delta_{f}$ is a nonzero locally nilpotent derivation of the algebra $\A$. \vskip 2pt
		\noindent
		\begin{minipage}{0.65\linewidth}
			Consider the Newton polygon of the polynomial~$f$.
			Let~$\frac {p} {q} $ be a positive irreducible fraction. We consider the following $ \Z $-grading $ \varGamma $ on $ \A $:
			$$\A = \bigoplus\limits_{d \in \mathbb{Z}}{\A}_d,\ \A_d = \langle x ^ uy ^ v|\ qu + pv = d\rangle.$$
			Let us denote by $\widehat{f} $ the top homogeneous component of $ f $ with respect to~$\varGamma$. And by $f_0$ we denote the sum of terms of lower degrees:~$ f_0 = f - \widehat{f} $.
		\end{minipage}
		\hfill
		\begin{minipage}{0.29\linewidth}
			\includegraphics[scale = 0.8]{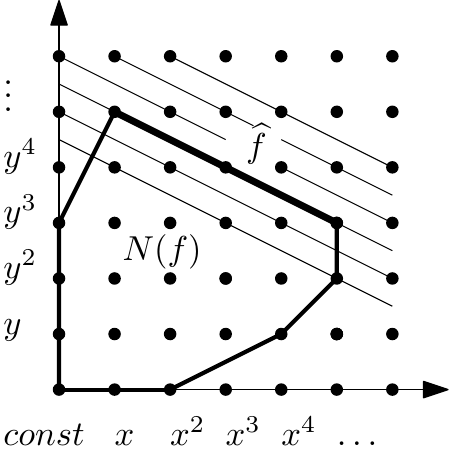}
		\end{minipage} \vskip 3pt
		\noindent
		The derivation $\delta_f$ is locally nilpotent i.e. for any $ h \in \A $ there exists~$m $ such that $\delta_f^m(h) = 0$. 
		But~$J(h_1+h_2,h) = J(h_1,h) + J(h_2,h)$. Therefore, $\delta_{f}(h)  = \delta_{\widehat{f}}(h)  + \delta_{f_0}(h)$.
		Suppose $h$ is homogeneous. Then for all $k$ we have 
		$\delta^k _ {\widehat {f}} (h)$ is the top homogeneous component of $\delta^k_{f}(h)$ with respect to the grading $ \varGamma $. 
		Hence, $ \delta ^ m _ {\widehat{f}} (h) = 0 $. 
		Consequently,~$ \delta _ {\widehat{f}}$ is also an LND.
		Let~$\alpha$ be the maximal number such that $\widehat{f}$ is divisible by $x^\alpha$, and $\beta$ be the maximal number such that $\widehat{f}$ is divisible by $y^\beta$. Then $\widehat{f} = x^\alpha y^\beta \widetilde{f}$. Let $$\widetilde{f} = C_1x^{p_1} + C_2x^{p_2}y^{q_2} + \ldots + C_{k-1}x^{p_{k-1}}y^{q_{k-1}} + C_ky^{q_k}.$$
		Note that the function $\widetilde{f}$ is homogeneous with respect to $\varGamma$.
		Let $p_k = q_1 = 0$. Then for all $i \in \left\lbrace 1,\ldots,k\right\rbrace$ we have
		$$
		\ p_iq + q_ip = p_1q + q_1p = p_1q 
		$$
		Therefore, for all $i \in \left\lbrace 2,\ldots,k\right\rbrace$ we obtain
		$$\frac{p_i - p_1}{q_i} = \frac{-p}{q}.
		$$
		Since $\frac{p}{q}$ is irreducible, for every $i \in \left\lbrace 2,\ldots,k\right\rbrace$ we can fix a positive integer number~$t_i$ such that
		$$(p_i - p_1) = t_i(-p),\ q_i = t_iq.$$
		We consider the rational function~$\frac{\widetilde{f}}{x^{p_1}}$:
		$$\frac{\widetilde{f}}{x^{p_1}} = C_1 + C_2x^{p_2 - p_1}y^{q_2} + \ldots + C_{k-1}x^{p_{k-1} - p_1}y^{q_{k-1}} + C_kx^{-p_1}y^{q_k}=$$
		$$= C_1 + C_2(x^{-p}y^{q})^{t_2} + \ldots + C_{k-1}(x^{-p}y^{q})^{t_{k-1}} + C_k(x^{-p}y^{q})^{t_k}.$$
		Consequently $\frac{\widetilde{f}}{x^{p_1}} = F(x^{-p}y^{q})$, where $F$ is a polynomial in one variable. The polynomial $F$ can be decomposed into linear factors, so $$\frac{\widetilde{f}}{x^{p_1}} = C\prod\limits_{j}(x^{-p}y^{q} - \lambda_j) \Longrightarrow {\widetilde{f}} = C\prod\limits_{j}(y^{q} - \lambda_jx^{p}) \Longrightarrow \widehat{f} = C x^\alpha y^\beta\prod\limits_j(y^{{q}} - \lambda_j x^{{p}}).$$
		
		The derivation  $\delta_{\widehat{f}}$ is an LND, hence $\ker\delta_{\widehat{f}}$ is factorially closed, i.e. $gh\in\delta_{\widehat{f}}$ implies $g\in\delta_{\widehat{f}}$, see \cite[Principle~1(a)]{b4}. 
		But $\delta_{\widehat{f}}(\widehat{f}) = J(\widehat{f},\widehat{f}) = 0$ that is $\widehat{f} \in \ker\delta_{\widehat{f}}$. Thus~$x^\alpha$, $y^\beta \in \ker\delta_{\widehat{f}}$, and for all $i$ we have $(y^q-\lambda_ix^p) \in \ker\delta_{\widehat{f}}$.
		By \cite[Principle 11(e)]{b4} the transcendence degree of $\ker\delta_{\widehat{f}}$ over $\K$ is equal to 1. So the following cases are possible:\\
		
		$\bullet$ $\widehat{f} = C x^\alpha$ or $\widehat{f} = Cy^\beta$;\\
		
		$\bullet$ $\widehat{f} = C(y^{{q}} - \lambda x^{{p}})^k$, $\lambda\neq 0$.\\
		
		Therefore, all the vertices of the polygon lie on the coordinate axes. That is the polygon $N(f)$ is a triangle or a segment. Suppose $N(f)$ is a triangle. Then we can choose $p$ and $q$ such that $\widehat{f} = C(y^{{q}} - \lambda x^{{p}})^k$. We have 
		$$\delta_{\widehat{f}}(h) = J(\widehat{f},h) = J(C(y^{{q}} - \lambda x^{{p}})^k,h) = Ck(y^{{q}}-\lambda x^{{p}})^{k-1}J(y^{{q}} - \lambda x^{{p}},h).$$
		Hence $\overline\delta = \delta_{(y^{{q}} - \lambda x^{{p}})}$ is also a locally nilpotent derivation. We have $\overline\delta(x) = -{{q}}y^{{{q}}-1}$, $\overline\delta(y) = -\lambda px^{{{p}}-1}$. If $p > 1$ and $q > 1$, then $x$ divides $\partial(y)$ and $y$ divides $\partial(x)$. In this case  \cite[Principle 5]{b4} implies $\partial(x) = 0$ or $\partial(y) = 0$. Since the transcendence degree of $\ker\delta_{\widehat{f}}$ over $\K$ is equal to 1 we obtain a contradiction. Therefore, at least one of~$p$ and $q$ is equal to 1.
		If ${{p}} = 1$, then~$\widehat{f} = (y^{{q}} - \lambda x)^k$. Let us consider the automorphism $\psi = (x + \frac{1}{\lambda}y^{{q}}, y)$. Then $\varphi\circ\psi(x) = s$, where~$s = f (x+\frac{1}{\lambda}y^{q},y)$.
		Since this automorphism is homogeneous with respect to the grading $\varGamma$, the leading component of the polynomial $s$ with respect to $\varGamma$ has the form: $(y^{{q}} - \lambda(x + \frac{1}{\lambda}y^{{q}}))^k = (-\lambda x)^k$. Thus $N(s)\subset N(f)$ and one of the vertices is deleted.
		\begin{minipage}{\linewidth}
			\centering
			\includegraphics[scale = 0.8]{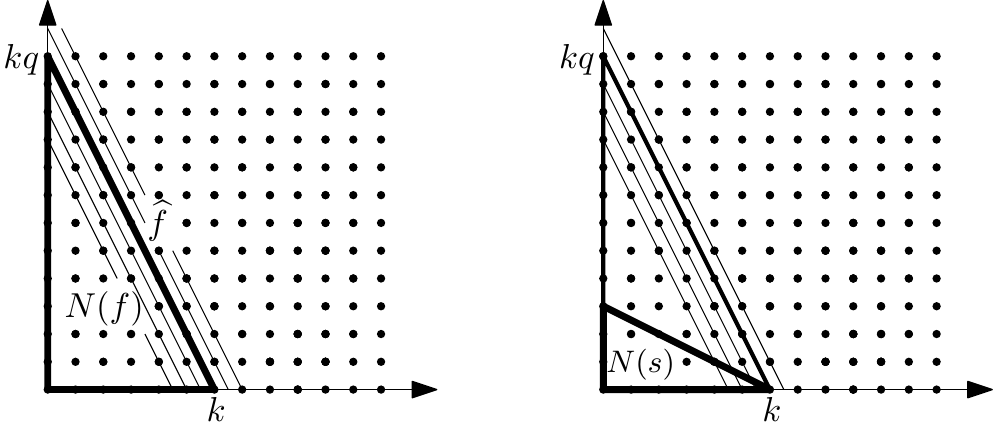}
			\vfill
		\end{minipage}
		So the Newton polygon of the polynomial $s$ is also a triangle (or a segment), but its area is smaller then the area on $N(f)$.
		The case $q = 1$ is similar.
		Now let us prove the claim of the theorem by induction on the area of  $N(f)$. Note that since $N(f)$ is a triangle with integer sides, its area belongs to $\frac{1}{2}\mathbb{Z}$.\\
		
		{\it Base.}\\
		If the area of the polygon $N(f)$ is equal to zero then $f$ has the form $cx + d$ or $cy + d$, where $c\in\mathbb{K}^\times, d\in\mathbb{K}$. If $f = cx + d$, then $J(\varphi) = c\frac{\partial g}{\partial y}$. Since $\varphi$ is an automorphism, $J(\varphi)\in\mathbb{K}^\times$. So $g = \mu y + w(x)$, where $\mu\in\mathbb{K}^\times, w(x)\in\mathbb{K}[x]$. Hence the automorphism~$\varphi$ is tame. Case $f = cy + d$ is similar.\\
		
		{\it Hypothesis.}\\
		Let $n\in\frac{1}{2}\mathbb{Z}$. Suppose that we know that if the area of the polygon $N(f)$ smaller than $n$ then $\varphi$ is tame. \\
		
		{\it Step.}\\
		Now let us show that if the area of $N(f)$ equal to $n$, then $\varphi$ is tame.
		Consider $\psi$ and $s$ introduced above. The area of $N(s)$ smaller than the area on $N(f)$. So by induction hypothesis $\varphi\circ\psi$ is tame. Note that $\varphi = \varphi\circ\psi\circ\psi^{-1}$. But $\psi^{-1}$ is elementary so $\varphi$ can be decomposed into composition of elementary automorphisms, i.e. it is tame.
	\end{proof}
	
	During the proof, we used only automorphisms preserving the origin while the area was nonzero. This implies the following statement.
	
	\begin{cor}\label{ori}
		If the automorphism of the plane preserves $ (0,0) $, then it decomposes into a composition of elementary ones that preserve $ (0,0) $.
	\end{cor}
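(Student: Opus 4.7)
The plan is to retrace the proof of Theorem~\ref{Jung} and verify that every elementary or linear factor appearing in the tame decomposition may be chosen to fix $(0,0)$, provided the original $\varphi$ does. Write $\varphi = (f,g)$ with $f(0,0) = g(0,0) = 0$.

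First I would observe that the inductive reduction $\psi = (x + \lambda^{-1} y^{q}, y)$ (and its symmetric analog in the case $q=1$) is itself an elementary automorphism fixing the origin, since plugging in $x = y = 0$ yields $(0,0)$. Consequently $\varphi \circ \psi$ preserves $(0,0)$ whenever $\varphi$ does, and the induction on the area of $N(f)$ proceeds entirely within the class of origin-preserving automorphisms: at each stage the current automorphism is replaced by an origin-preserving one whose Newton polygon has strictly smaller area.

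Next I would treat the base case. When $N(f)$ has area zero we have $f = cx + d$; the hypothesis $f(0,0) = 0$ forces $d = 0$, and similarly $g = \mu y + w(x)$ with $w(0) = 0$. Setting $L = (cx, \mu y)$ and $E = (x, y + w(x/c))$, one verifies directly that $L \circ E = (cx, \mu y + w(x))$. Here $L$ is linear and hence automatically fixes the origin, while $E$ is elementary and fixes the origin because $w(0) = 0$; the case $f = cy + d$ is symmetric.

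Finally I would reassemble by reversing the induction: if $\varphi \circ \psi_1 \circ \cdots \circ \psi_N = L \circ E$, then $\varphi = L \circ E \circ \psi_N^{-1} \circ \cdots \circ \psi_1^{-1}$. Each $\psi_i^{-1}$ is again an origin-preserving elementary automorphism, since the inverse of $(x + \lambda^{-1} y^q, y)$ is $(x - \lambda^{-1} y^q, y)$. This exhibits $\varphi$ as a composition of origin-preserving elementary and linear automorphisms. I do not expect a genuine obstacle here; the only care needed is the bookkeeping at the base case, namely ruling out a constant term in $f$ or $g$, and this is precisely what the origin-preserving hypothesis provides.
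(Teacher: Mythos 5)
Your proposal is correct and is essentially the paper's own argument: the paper proves Corollary~\ref{ori} simply by remarking that the inductive reductions $\psi$ in the proof of Theorem~\ref{Jung} all fix the origin, so only the base case needs the hypothesis $f(0,0)=g(0,0)=0$ to kill the constant terms. Your write-up just makes this bookkeeping explicit.
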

	
	The following proposition shows that in case of two variables there are no gradings admitting graded-wild automrorphisms. 
	\begin{prop}\label{tgt}
		Let $\widetilde{\varGamma}$ be a grading by an abelian group of the polynomial algebra in two variables. Then all graded automorphisms of $\A=\K[x,y]$ are graded-tame with respect to $\widetilde{\varGamma}$.
	\end{prop}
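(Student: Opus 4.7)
The plan is to re-run the Makar-Limanov proof of Jung's theorem recorded above, and verify that at each step every elementary or linear automorphism that appears already respects the given grading $\widetilde{\varGamma}$. The key preliminary observation is this: since $x$ and $y$ are $\widetilde{\varGamma}$-homogeneous and a graded automorphism $\varphi$ sends each component $\A_g$ to itself, the polynomials $f=\varphi(x)$ and $g=\varphi(y)$ lie in single $\widetilde{\varGamma}$-components. In particular, every monomial occurring in $f$ with nonzero coefficient has the same $\widetilde{\varGamma}$-degree, and likewise for $g$. From here I would induct on the area of $N(f)$, exactly as in Theorem~\ref{Jung}.

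For the inductive step, the Jung proof produces $\widehat{f}=C(y^{q}-\lambda x^{p})^{k}$ with $\lambda\neq 0$ and (after swapping variables if necessary) $p=1$, and it uses the elementary automorphism $\psi=(x+\tfrac{1}{\lambda}y^{q},y)$. The two extremal points of the edge of $N(f)$ opposite to the origin are $(kp,0)$ and $(0,kq)$; being vertices of $N(f)$, they correspond to monomials $x^{kp}$ and $y^{kq}$ occurring with nonzero coefficients in $f$. Homogeneity of $f$ with respect to $\widetilde{\varGamma}$ forces $kp\deg_{\widetilde{\varGamma}}(x)=kq\deg_{\widetilde{\varGamma}}(y)$, hence $\deg_{\widetilde{\varGamma}}(y^{q})=\deg_{\widetilde{\varGamma}}(x)$ when $p=1$. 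Thus $x+\tfrac{1}{\lambda}y^{q}$ is $\widetilde{\varGamma}$-homogeneous and $\psi$ is a $\widetilde{\varGamma}$-graded elementary automorphism. Then $\varphi\circ\psi$ is still $\widetilde{\varGamma}$-graded, and its first coordinate $s$ has Newton polygon strictly smaller than $N(f)$. By the induction hypothesis $\varphi\circ\psi$ is graded-tame, and since $\psi^{-1}$ is graded elementary, $\varphi=(\varphi\circ\psi)\circ\psi^{-1}$ is graded-tame as well. The case $q=1$ is symmetric.

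It remains to treat the base case, where $N(f)$ has area zero, and this is where I expect the only real bookkeeping. Here $f$ has the form $cx+d$ (or $cy+d$), and the Jacobian condition forces $g=\mu y+w(x)$ with $\mu\in\K^{\times}$. Since $f,g$ are $\widetilde{\varGamma}$-homogeneous, the constant $d$ can only be nonzero when $\deg_{\widetilde{\varGamma}}(x)=0$, and every monomial of $w(x)$ must have the same $\widetilde{\varGamma}$-degree as $\mu y$; under these constraints the standard decomposition $\varphi=(cx,y)\circ(x+d/c,y)\circ(x,y+w(x)/\mu)\circ(x,\mu y)$ expresses $\varphi$ as a composition of $\widetilde{\varGamma}$-graded linear and elementary factors. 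The main obstacle, if any, is keeping track of the two different gradings in play — the auxiliary $\Z$-grading $\varGamma$ attached to the slope $p/q$ of an edge of $N(f)$, which is the engine of the Jung proof, and the given grading $\widetilde{\varGamma}$, which is preserved automatically because $\widetilde{\varGamma}$-homogeneity of $f$ dictates the compatibility of $\deg_{\widetilde{\varGamma}}(x)$ with $\deg_{\widetilde{\varGamma}}(y^{q})$ needed for $\psi$ to be graded.
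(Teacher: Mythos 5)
Your overall strategy is exactly the paper's: re-run the Makar-Limanov proof of Jung's theorem and check that the elementary automorphism $\psi=(x+\tfrac{1}{\lambda}y^{q},y)$ produced at each step is already $\widetilde{\varGamma}$-graded, which reduces to showing $\deg_{\widetilde{\varGamma}}(x)=q\deg_{\widetilde{\varGamma}}(y)$. However, the way you derive this identity has a real flaw. You use only the two extremal vertices of $N(f)$, i.e.\ the monomials $x^{kp}$ and $y^{kq}$, and conclude from $kp\deg_{\widetilde{\varGamma}}(x)=kq\deg_{\widetilde{\varGamma}}(y)$ that $p\deg_{\widetilde{\varGamma}}(x)=q\deg_{\widetilde{\varGamma}}(y)$. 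The proposition is stated for a grading by an \emph{arbitrary} abelian group, and in such a group you cannot cancel the factor $k$: if $G$ has $k$-torsion, $kg_1=kg_2$ does not imply $g_1=g_2$. This is not a hypothetical worry — the only place the paper uses this proposition is for the $\Z_c$-grading $\widetilde{\varGamma}$ on $\K[u,v]$, i.e.\ precisely a finite (torsion) group, so the cancellation step is invalid in the intended application.

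The repair is cheap and is what the paper does: instead of the two extremal monomials, use two \emph{adjacent} monomials in the binomial expansion
$$\widehat{f}=C(y^{q}-\lambda x)^{k}=C\bigl(y^{qk}-k\lambda\, x y^{q(k-1)}+\ldots\bigr).$$
Since $\mathrm{char}\,\K=0$ and $\lambda\neq0$, the coefficient $-Ck\lambda$ of $xy^{q(k-1)}$ is nonzero, and $\widehat{f}$ is a subsum of the $\widetilde{\varGamma}$-homogeneous polynomial $f$, so $\deg_{\widetilde{\varGamma}}(y^{qk})=\deg_{\widetilde{\varGamma}}(xy^{q(k-1)})$, which gives $\deg_{\widetilde{\varGamma}}(x)=q\deg_{\widetilde{\varGamma}}(y)$ directly, with no division by $k$. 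With that substitution your inductive step goes through, and your treatment of the base case (the constant $d$ forces $\deg_{\widetilde{\varGamma}}(x)=0$ if $d\neq0$, and each monomial of $w$ must have degree $\deg_{\widetilde{\varGamma}}(y)$) is fine.
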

	\begin{proof}
		Let $\varphi = (f,g)$ be a $\widetilde{\varGamma}$-graded automorphism of $\A$. In the proof of Theorem~\ref{Jung} we consider a grading $\varGamma$ of $\A$ and we show, that $\widehat{f} = C(y^{{q}} - \lambda x)^k$, $\lambda\neq 0$ is the top homogeneous component of $f$ with respect to  $\varGamma$. Also we show that the automorphism
		$\psi = (x + \frac{1}{\lambda}y^{{q}}, y)$ decreases the area of the Newton polygon $N(f)$. Let us prove that $\psi$ is also homogeneous with respcet to $\widetilde{\varGamma}$.
		Since $\varphi$ is homogeneous, $f$ is homogeneous. So all monomials of $\widehat{f}$ has the same degree. But 
		$$\widehat{f} =  C(y^{{q}} - \lambda x)^k = C(y^{qk} - k\lambda xy^{q(k-1)}+\ldots).$$ So $\deg y^{qk} =\deg  xy^{q(k-1)}$. Hence, $\deg x=q \deg y$. Therefore, $\psi$ is homogeneous.
	\end{proof}

	\section{Polynomial algebra in three variables}
	In this section we give a criterium for a $\Z$-grading of the algebra $\A=\K[x,y,z]$ to admit a graded-wild automorphism, see Theorem~\ref{mainmain}.
	
	First of all, we consider the case of strictly positive grading for an arbitrary number of variables.

	\begin{prop}\label{tetriod}
		Let degree of $x_i$ be a positive integer for all $i$.\
		Then all graded automorphisms of the algebra $\K[x_1,\ldots,x_n]$ are graded-tame.
	\end{prop}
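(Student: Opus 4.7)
The plan is to induct on $n$. The base case $n=1$ is immediate: a graded automorphism then has the form $\varphi(x_1)=cx_1$ for some $c\in\K^{\times}$, which is linear.

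For the inductive step, I would first reduce to the case where the ``linear part'' of $\varphi$ is the identity. Since every $\deg x_i$ is strictly positive, $\varphi(x_i)$ has no constant term, so $\varphi$ fixes the origin; write $\varphi(x_i)=\ell_i+h_i$ with $\ell_i$ the piece linear in the $x_j$'s and $h_i$ collecting monomials of total $x$-degree $\geq 2$. Homogeneity forces $\ell_i\in\langle x_j:\deg x_j=\deg x_i\rangle$. The Jacobian of $\varphi$ is a nonzero constant, and evaluated at the origin it equals the determinant of the coefficient matrix of the $\ell_i$'s, so the map $\ell:x_i\mapsto\ell_i$ is an invertible graded linear automorphism, block-diagonal with respect to the pieces $V_d=\langle x_j:\deg x_j=d\rangle$. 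Each invertible block decomposes via ordinary Gaussian elimination into graded elementary automorphisms of the form described in Remark~\ref{mr}, so $\ell$ is graded-tame; replacing $\varphi$ by $\ell^{-1}\circ\varphi$, I may assume $\varphi(x_i)=x_i+h_i$.

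Now order the variables so $\deg x_1\leq\cdots\leq\deg x_n$ and let $r$ be the number of variables of maximal degree $d_n$. The key combinatorial observation, which uses strict positivity essentially, is that no $h_i$ can involve any variable $x_j$ with $j>n-r$: a monomial $x_1^{a_1}\cdots x_n^{a_n}$ with $\sum a_l\geq 2$ and $a_j\geq 1$ for some $j>n-r$ would have $\Z$-degree $\geq d_n+d_1>d_n\geq\deg x_i$, contradicting homogeneity of $h_i$. Hence $h_j\in\K[x_1,\ldots,x_{n-r}]$ for every $j>n-r$, and each $\psi_j:x_j\mapsto x_j+h_j$ (fixing the other variables) is a graded elementary automorphism. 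These $\psi_j$'s commute, and setting $\Psi=\psi_n\circ\cdots\circ\psi_{n-r+1}$, the composition $\Psi^{-1}\circ\varphi$ fixes $x_{n-r+1},\ldots,x_n$ and maps $x_i\mapsto x_i+h_i$ with $h_i\in\K[x_1,\ldots,x_{n-r}]$ for $i\leq n-r$.

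Finally, $\Psi^{-1}\circ\varphi$ restricts to a graded endomorphism of $\K[x_1,\ldots,x_{n-r}]$; applying the same positivity argument to its inverse $\varphi^{-1}\circ\Psi$, which also has identity linear part and fixes the top-degree variables, shows the restriction is in fact an automorphism. By the induction hypothesis it is graded-tame in $\K[x_1,\ldots,x_{n-r}]$ with respect to the induced positive grading, and each graded elementary or linear piece extends trivially to a graded elementary or linear automorphism of $\K[x_1,\ldots,x_n]$, making $\Psi^{-1}\circ\varphi$ graded-tame; composing back with $\Psi$ and $\ell$ yields the result. The main obstacle is the combinatorial separation argument in the third paragraph: without strict positivity, monomials of large total $x$-degree can have small $\Z$-degree, $h_i$ may then genuinely mix top-degree and lower-degree variables, and the induction collapses---this is precisely why the proposition fails for arbitrary $\Z$-gradings.
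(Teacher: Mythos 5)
Your proof is correct and rests on the same key observation as the paper's: strict positivity of the degrees together with homogeneity forces $\varphi(x_i)$ to be linear in the variables of degree $\deg x_i$ plus a polynomial in the strictly lower-degree variables, from which the graded elementary/linear factorization follows. You package this as an induction peeling off the top-degree block (with an explicit reduction of the linear part), whereas the paper writes the block factorization $\eta_{m_s}\circ\zeta_{m_s}\circ\cdots\circ\eta_{m_1}$ directly from the bottom up, but the argument is essentially identical.
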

	
	\begin{proof}
		Without loss of generality, we can assume that $\deg x_1  \leqslant \ldots \leqslant \deg x_n$. 
		Let $\deg x_1 = \ldots = \deg x_{n_1} = m_1<\deg x_{n_1+1}$ and $\varphi$ be an automorphism preserving this grading.
		Then $x_i$ maps to $ \lambda_1x_1 + \ldots + \lambda_ {n_1} x_ {n_1}$ where $1 \leqslant i \leqslant n_1 $, since the degree of any other monomial is greater than $m_1$.
		
		Now let $\deg(x_{n_1+1}) = \ldots = \deg(x_{n_2}) = m_2<\deg x_{n_2+1}$. 
		Then $x_i$ maps to $L_i(x_{n_1+1}, \ldots, x_{n_2}) + f(x_1, \ldots, x_{n_1})$, where $n_1 < i \leqslant n_2$ , and $L_i$ is a linear function, and $f$ is a polynomial.
		
		Similarly, for variables of degree $m_k$. 
		Let $$\deg x_{n_{k-1}}<\deg x_{n_{k-1}+1} = \ldots = \deg x_{n_k} = m_k<\deg x_{n_k+1}.$$ 
		Then
		$$x_i \mapsto L_i(x_{n_{k-1}+1}, \ldots, x_{n_k}) + f_i(x_1, \ldots, x_{n_{k-1}}), \text{where } n_{k-1} < i \leqslant n_k,$$ 
		Now we represent $\varphi$ as follows: $\varphi =
		\eta_{m_s}\circ\zeta_{m_s}\circ\ldots\circ\eta_{m_2}\circ\zeta_{m_2}\circ\eta_{m_1}$,
		where $\eta_{m_i}$ is a linear mapping changing generators of degree $m_i$, and $\zeta_{m_i}$ is a composition of elementary automorphisms which add polynomials to generators of lower degree to variables of degree $m_i$.
	\end{proof}
	
	\begin{remark}\label{tto}
		If degrees of all variables are negative, then all graded automorphisms are also graded-tame.
	\end{remark}
	
	From now we investigate three-dimensional algebras.
	
	\begin{remark}\label{pm}
		Let us consider a $\Z$-grading $\varGamma$ of $\A=\mathbb{K}[x,y,z]$. If all degrees of variables are positive or negative, then all graded automorphism are graded-tame by Proposition~\ref{tetriod} and Remark~\ref{tto}. Assume this is not the case. If we divide all $\deg_\Gamma x$, $\deg_\Gamma y$ and $\deg_\Gamma z$ by their greatest common divisor, then we obtain a new $\Z$-grading which admits a graded-wild automorphism if and only if $\varGamma$ does. Also multiplying all degrees by $-1$ does not change the situation. Therefore, without loss of generality we can assume $(\deg_{\varGamma}(x), \deg_{\varGamma}(y), \deg_{\varGamma}(z)) = (a,b,-c)$, where $a, b, c\geq 0$, $a \geq b$ and $\gcd(a,b,c)= 1$ or $(a,b,-c)=(0,0,0)$.
	\end{remark}
	
	\begin{theorem}\label{mainmain}
		The grading $\varGamma$ admits graded-wild automorphisms if and only if one of the following occures\\
		
		$\bullet$ $(a,b,c) = (0,0,0)$.\\
		
		$\bullet$ $a,b,c$ strictly positive, $a = qb + pc$ for some integers $p\geq 1$ and $q\geq 2$.\\
	\end{theorem}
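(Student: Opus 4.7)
The plan is to prove the two implications separately: sufficiency by exhibiting an explicit graded-wild automorphism in each of the two listed cases, and necessity by proving that every graded automorphism is graded-tame in every other grading.

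For sufficiency, the case $(a,b,c)=(0,0,0)$ is immediate: the grading is trivial, so every automorphism is graded, and the Nagata automorphism from the introduction is graded-wild by Shestakov--Umirbaev. In the case $a,b,c>0$ with $a=qb+pc$, $q\ge 2$, $p\ge 1$, I would give a Nagata-type construction. The polynomial
\[
F=xz^{p}-y^{q}
\]
is $\varGamma$-homogeneous of degree $a-pc=qb$. The derivation
\[
D=-qy^{q-1}\,\partial_{x}-z^{p}\,\partial_{y}
\]
is $\varGamma$-homogeneous and triangular with respect to the order $z<y<x$, hence locally nilpotent, and a direct computation gives $D(F)=0$. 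Consequently $\Delta=FD$ is again LND, and $\sigma=\exp(\Delta)$ is a $\varGamma$-graded automorphism with $\sigma(z)=z$. In the special case $q=2$, $p=1$ it reproduces Nagata's automorphism after interchanging $x$ and $y$, so this is a genuine generalization.

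The main obstacle is proving that $\sigma$ is graded-wild. Any graded-tame factorization is in particular a tame factorization, so it suffices to show that $\sigma$ is wild in the ordinary sense. For $(q,p)=(2,1)$ this is the theorem of Shestakov--Umirbaev. For general $q,p$ I would apply their degree-estimate argument to the triple $(\sigma(x),\sigma(y),z)$: the top components of $\sigma(x)$ and $\sigma(y)$ with respect to total degree are explicit powers of $F$ times monomials in $y,z$, which match the combinatorial shape of a Nagata pair, and any putative tame decomposition forces degree inequalities incompatible with those of $\sigma$.

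For necessity, I split into cases. When $c=0$ Proposition~\ref{tetriod} applies, and Remark~\ref{tto} handles the all-negative case. When exactly one or two of $a,b,c$ vanish but the remaining degrees have mixed sign, the grading forces $\varphi$ to fix one variable up to scalar and to act by a plane automorphism on the other two, so Theorem~\ref{Jung} together with Corollary~\ref{ori} suffice. The delicate remaining case is $a,b,c>0$ with $a\neq qb+pc$ for every $q\ge 2$, $p\ge 1$. Here I would emulate the Makar-Limanov reduction from Theorem~\ref{Jung}: assign to each $\varGamma$-graded automorphism $\varphi=(f,g,h)$ a complexity (for instance the Newton polygon of the top component of $f$ with respect to an auxiliary $\Z$-grading of the kind used in the Jung-type argument), and show that $\varphi$ can be post-composed with a $\varGamma$-graded elementary that strictly decreases it. The arithmetic hypothesis $a\neq qb+pc$ with $q\ge 2$ is precisely what rules out a Nagata-type leading component and guarantees the reducing elementary is itself $\varGamma$-homogeneous, so the induction terminates and $\varphi$ is graded-tame.
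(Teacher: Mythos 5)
Your proposal has a genuine gap in the sufficiency direction, and it starts from a misconception about what must be proved. First, the construction itself fails: for $\sigma=\exp(FD)$ to be $\varGamma$-graded you need the derivation $FD$ to be homogeneous of degree $0$, and with $F=xz^p-y^q$ of degree $qb$ and $D=-qy^{q-1}\partial_x-z^p\partial_y$ of degree $-b-pc$, the product has degree $(q-1)b-pc$, which vanishes only under the extra relation $(q-1)b=pc$ (e.g.\ for $(a,b,c)=(5,1,1)$, $q=2$, $p=3$ the exponential is not graded). Second, even where the construction is graded, reducing graded-wildness to ordinary wildness is the wrong move: it is a strictly stronger claim, your justification of it is a sketch of a Shestakov--Umirbaev-type degree estimate rather than a proof, and it is unnecessary. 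The paper's wild examples are not claimed to be wild in the ordinary sense at all. Proposition~\ref{wa} builds them by conjugating a plane automorphism $\phi=(u,v+u^{\widehat l})$ by $\tau=(u+v^{\widehat q},v)$ on the hyperplane $z=1$ and lifting; graded-wildness is then detected by the elementary obstruction of Lemma~\ref{ga}, namely that any graded-tame automorphism in $\mathrm{E}$ must satisfy $\widetilde\varphi(u)\in\lambda u+I^{\widehat q+c}$, while the constructed $\varepsilon(u)$ contains the lower-order term $\widehat q\,v^{\widehat q-1}u^{\widehat l}$. No deep wildness theorem is invoked for this case.

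On the necessity side your treatment of the degenerate cases ($c=0$, one or two degrees zero, all degrees of one sign) matches the paper's Lemma~\ref{zr}, Proposition~\ref{tetriod} and Remark~\ref{tto} in spirit. But for the core case $a,b,c>0$ your plan to run a Makar-Limanov/Newton-polygon induction directly in three variables is only a hope, and it misses the mechanism that actually makes the paper's argument work: restricting automorphisms fixing $z$ to the plane $z=1$ via the injective homomorphism $\alpha$, characterizing exactly which graded plane elementaries lift (Lemma~\ref{ma}, governed by the invariant $\widehat q=\max\{q: bq\equiv a\ (\mathrm{mod}\ c),\ bq<a\}$), and then, in the cases $\widehat q\le 0$ and $\widehat q=1$ (Propositions~\ref{sz} and~\ref{re}), rearranging the Jung factorization on the plane so that every factor lifts. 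The arithmetic condition $a\ne qb+pc$ enters precisely as $\widehat q\le 1$, not as a statement about leading components of Newton polygons; without the lifting analysis your induction has no reason to terminate inside the graded subgroup.
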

	In order to prove this theorem we need some preparations.

	\begin{lem}\label{usl}
		Let $\varphi = (f,\lambda_1y,\lambda_2z)$ is an automorphism of $\A$. Then $f = \lambda x + f_0(y,z)$, where $\lambda\neq 0$.
	\end{lem}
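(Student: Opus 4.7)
The plan is to read off the conclusion directly from the Jacobian matrix of $\varphi$, using the elementary (non-conjectural) direction of the Jacobian identity: any polynomial automorphism has a nonzero constant Jacobian.

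First I would compute the Jacobian matrix of $\varphi = (f, \lambda_1 y, \lambda_2 z)$. The second row is $(0, \lambda_1, 0)$ and the third row is $(0, 0, \lambda_2)$, so expanding along the first column gives
\[
J(\varphi) \;=\; \det\begin{pmatrix} \partial f/\partial x & \partial f/\partial y & \partial f/\partial z \\ 0 & \lambda_1 & 0 \\ 0 & 0 & \lambda_2 \end{pmatrix} \;=\; \lambda_1 \lambda_2 \,\frac{\partial f}{\partial x}.
\]

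Next I would invoke the chain rule applied to $\mathrm{id} = \varphi \circ \varphi^{-1}$, which yields $J(\varphi)\cdot\bigl(J(\varphi^{-1})\circ\varphi\bigr) = 1$ in $\A$. Hence $J(\varphi)$ is a unit of $\A = \K[x,y,z]$, i.e. a nonzero element of $\K$. In particular $\lambda_1,\lambda_2 \neq 0$, and $\partial f/\partial x$ is a nonzero element $\lambda$ of $\K$.

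Finally, the equation $\partial f/\partial x = \lambda \in \K^{\times}$ forces $f$ to be linear in $x$ with leading coefficient $\lambda$: writing $f = \sum_i g_i(y,z) x^i$ we see that every $g_i$ with $i\geq 2$ vanishes and $g_1 = \lambda$, so $f = \lambda x + f_0(y,z)$ with $f_0 = g_0(y,z)\in\K[y,z]$ and $\lambda\neq 0$, as required. There is essentially no obstacle here beyond citing the easy direction of the Jacobian identity; the whole argument is a one-line determinant calculation.
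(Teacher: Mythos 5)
Your proof is correct, but it takes a different route from the paper. You use the elementary fact that a polynomial automorphism has Jacobian in $\K^{\times}$ (via the chain rule applied to $\varphi\circ\varphi^{-1}=\mathrm{id}$), compute $J(\varphi)=\lambda_1\lambda_2\,\partial f/\partial x$, and conclude $\partial f/\partial x=\lambda\in\K^{\times}$, hence $f=\lambda x+f_0(y,z)$ since the characteristic is zero. The paper instead writes $f=xf_1(x,y,z)+f_0(y,z)$, composes $\varphi$ with the elementary automorphism $(x-h(y,z),y,z)$ where $h(\lambda_1y,\lambda_2z)=f_0(y,z)$ so that the composite sends $x$ to $xf_1$, and then uses that an automorphism takes the irreducible element $x$ to an irreducible element, forcing $f_1$ to be a nonzero constant. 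Both arguments are elementary and sound; your Jacobian computation additionally yields $\lambda_1,\lambda_2\neq 0$ for free (the paper needs this implicitly to define $h$), and the constancy of the Jacobian is in any case already invoked in the paper in the base case of the proof of Jung's theorem, so you are not importing any machinery foreign to the text. The one point worth flagging is that your last step (a polynomial with constant $x$-derivative is linear in $x$) genuinely uses characteristic zero, which the paper assumes throughout, whereas the irreducibility argument would survive in positive characteristic.
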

	\begin{proof}
		Let $f(x,y,z) = xf_1(x,y,z) + f_0(y,z)$. Consider such a polynomial $h(y,z)$ that $h(\lambda_1 y, \lambda_2 z)=f_0(y,z)$. Then~$\psi = (x - h(y,z),y,z)$~is also an automorphism. Then $\psi\circ\varphi(x) = xf_1(x,y,z)$. But an automorphism takes an irreducible function to an irreducible one. Hence $f_1 = \lambda\neq 0$.
	\end{proof}
	
	Assume that there is no zeros among numbers $a$, $b$ and $c$. That is $a,b,c>0$. By $\Aut_{\varGamma}(\A)$ we denote the subgroup of $\varGamma$-graded automorphisms in the group $\Aut(\A)$ of all automorphisms.
	
	\begin{lem}\label{otr}
		Let $\varphi$ be a graded automorphism. Then $\varphi(z)=\lambda z$, $\lambda\neq 0$.
	\end{lem}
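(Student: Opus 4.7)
The plan is to exploit the very restricted shape of homogeneous elements of degrees $-c$ and $0$, and then invoke irreducibility of the image of a variable under any automorphism of $\A=\K[x,y,z]$.

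First I would describe $\A_{-c}$ and $\A_0$ as $\K$-vector spaces. A monomial $x^iy^jz^k$ has $\varGamma$-degree $ai+bj-ck$, so it lies in $\A_{-c}$ iff $ai+bj=c(k-1)$; since $a,b,c>0$ and $i,j\geq 0$ this forces $k\geq 1$, and so every monomial occurring in any element of $\A_{-c}$ is divisible by $z$. Thus $\A_{-c}=z\cdot\A_0$. Analogously, a monomial of degree $0$ satisfies $ai+bj=ck$, and for $k=0$ the hypothesis $a,b>0$ forces $i=j=0$, so every non-constant monomial of $\A_0$ is divisible by $z$. Applying these two observations to $\varphi(z)\in\A_{-c}$ I would obtain $\varphi(z)=zQ$ with $Q\in\A_0$, and in turn $Q=c_0+zR$ with $c_0\in\K$ and some homogeneous $R$ of degree $c$ (possibly zero); hence $\varphi(z)=z(c_0+zR)$.

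To finish I would use that the coordinate $\varphi(z)$ must be irreducible in $\A$: any factorization $\varphi(z)=u_1u_2$ with both factors non-units gives, after applying $\varphi^{-1}$, a factorization $z=\varphi^{-1}(u_1)\varphi^{-1}(u_2)$ into two non-units, contradicting irreducibility of $z$ in $\A$. Since $z$ itself is not a unit, the other factor $c_0+zR$ must be a unit, i.e.\ a nonzero constant; this forces $R=0$ and $c_0\neq 0$, so $\varphi(z)=c_0z$ with $c_0\in\K^{\times}$.

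I do not anticipate a serious obstacle; the argument is essentially two short monomial counts followed by the general fact that a coordinate is irreducible. The only delicate point worth checking is that the hypothesis $a,b>0$ is really used in both structure computations: it is precisely what makes $ai+bj=0$ (with $i,j\geq 0$) imply $i=j=0$, thereby excluding non-constant $z$-free monomials from $\A_0$ and giving the crucial decomposition $Q=c_0+zR$.
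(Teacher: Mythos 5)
Your proof is correct and follows essentially the same route as the paper: the degree count shows every monomial of $\varphi(z)\in\A_{-c}$ is divisible by $z$, and then irreducibility of a coordinate forces the cofactor to be a nonzero constant (the paper leaves this last step implicit). The extra analysis of $\A_0$ is harmless but not needed once you invoke irreducibility.
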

	\begin{proof}
		Since $\deg_\Gamma(z)<0$ and $\varphi$ preserves the grading, every monomial in $\varphi(z)$ has negative degree. Therefore, $z$ divides $\varphi(z)$. Hence, $\varphi(z)=\lambda z$, $\lambda\neq 0$.
	\end{proof}
	
	\begin{lem}\label{tr}
		If $\gcd( a , c) $ does not divide $ b $, then every graded automorphism of $\A$ has the following form $(\lambda_1x + f_0(y,z),\ \lambda_2y,\ \lambda_3z)$, where $f_0$ is a homogeneous function of degree $a$ and $\lambda_i\neq 0$.
	\end{lem}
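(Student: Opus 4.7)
The plan is to pin down $\varphi(y)$ using the hypothesis $\gcd(a,c)\nmid b$, and then invoke Lemma~\ref{usl} to handle $\varphi(x)$. The key observation will be that the hypothesis forces $y$ to divide $\varphi(y)$; combined with the irreducibility of $\varphi(y)$ in the UFD $\A$, this pins the cofactor down to a scalar.

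By Lemma~\ref{otr} we already have $\varphi(z) = \lambda_3 z$ with $\lambda_3 \neq 0$. To analyse $\varphi(y)$, I would expand it as a sum of monomials $x^i y^j z^k$. Since $\varphi(y)$ is $\varGamma$-homogeneous of degree $b$, each such monomial satisfies $ai + bj - ck = b$, or equivalently $b(j-1) = ck - ai$. The right-hand side lies in $d\Z$ for $d := \gcd(a,c)$, so $d \mid b(j-1)$. If some monomial had $j = 0$, this would yield $d \mid b$, contradicting the hypothesis. Hence $j \geq 1$ for every monomial of $\varphi(y)$, so $y \mid \varphi(y)$; write $\varphi(y) = y \cdot B$ with $B \in \A$.

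Now $(y)$ is a prime ideal of $\A$ since $\A/(y) \cong \K[x,z]$ is a domain; because $\varphi$ is an automorphism, $(\varphi(y)) = \varphi((y))$ is also prime, and therefore $\varphi(y)$ is irreducible in the UFD $\A$. The factorisation $\varphi(y) = y \cdot B$ with $y$ a non-unit irreducible forces $B$ to be a unit, i.e.\ $B = \lambda_2 \in \K^*$, so $\varphi(y) = \lambda_2 y$. Applying Lemma~\ref{usl} to $\varphi = (\varphi(x), \lambda_2 y, \lambda_3 z)$ then yields $\varphi(x) = \lambda_1 x + f_0(y,z)$ with $\lambda_1 \neq 0$, and since $\varphi(x)$ is $\varGamma$-homogeneous of degree $a$, so is $f_0$. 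The only delicate point is the divisibility $y \mid \varphi(y)$: that is precisely the place where the hypothesis $\gcd(a,c) \nmid b$ is used, and once it is in hand, everything else is a standard application of unique factorisation in $\A$.
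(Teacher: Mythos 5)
Your proof is correct and follows essentially the same route as the paper: you use the congruence forced by $\varGamma$-homogeneity to rule out monomials of $\varphi(y)$ not divisible by $y$, then invoke irreducibility of $\varphi(y)$ to get $\varphi(y)=\lambda_2 y$, and finish with Lemmas~\ref{otr} and~\ref{usl}. The only cosmetic difference is that you phrase the divisibility step via $d\mid b(j-1)$ rather than observing directly that any monomial $x^pz^q$ has degree in $d\Z$, which is the same argument.
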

	\begin{proof}
		Denote $ \gcd(a,c) = d $. We have $\deg_{\varGamma}(x^pz^q) = ap + cq$. Hence, the degree of a monomial~$x^pz^q$ is a multiple of $d$. Since $d$ does not divide $b$, the image $g$ of $y$ under a $\varGamma$-graded automorphism $\varphi = (f, g, h)$ does not contain monomials of the form $x^pz^q $.
		So every monomial in $g$ is divisible by $y$. Therefore $ g $ is divisible by $y$. But an automorphism takes an irreducible function to an irreducible one. Hence $g = \lambda_2y$ for some nonzero $\lambda_2$. By Lemma \ref{otr} we obtain $h=\lambda_3 z$, $\lambda_3\neq 0$. Thus, the condition of Lemma~\ref{usl} holds, and $\varphi$ preserves the grading of $\varGamma$, so $f=\lambda x+f_0(y,z)$, where $f_0$ is a homogeneous function of degree $a$ with respect to~$\varGamma$. 
	\end{proof}
	
	If conditions of Lemma~\ref{tr} hold then every $\Gamma$-graded automorphism is graded-tame.
	Further, we assume that $\gcd(a, c) = \gcd(b, c) = 1$.


	Let us consider two subgroups of $\Aut_\varGamma(\A)$:
	\begin{equation*}
	\mathrm{E} =  \left\lbrace \varphi \in \Aut_{\varGamma}(\A) \big| \varphi(z) = z \right\rbrace,
	\qquad
	\mathrm{T}=\left\lbrace(x, y, \lambda z)\mid \lambda\neq 0\right\rbrace.
	\end{equation*}
	\begin{lem}The group $\Aut_{\varGamma}(\A)$ is isomorphic to the semidirect product~$\mathrm{E} \leftthreetimes \mathrm{T}$.
	\end{lem}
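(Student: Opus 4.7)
The plan is to exhibit a split short exact sequence
$$1 \longrightarrow \mathrm{E} \longrightarrow \Aut_{\varGamma}(\A) \stackrel{\pi}{\longrightarrow} \mathrm{T} \longrightarrow 1,$$
from which the semidirect product decomposition follows by the standard splitting lemma.

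First I would define the map $\pi$. By Lemma~\ref{otr}, every $\varphi \in \Aut_{\varGamma}(\A)$ satisfies $\varphi(z) = \lambda_{\varphi} z$ for a uniquely determined $\lambda_{\varphi} \in \K^{\times}$. Set $\pi(\varphi) = (x, y, \lambda_{\varphi} z) \in \mathrm{T}$. A one-line computation shows that $\lambda_{\varphi \circ \psi} = \lambda_{\varphi}\lambda_{\psi}$, so $\pi$ is a group homomorphism. Its kernel is exactly the set of graded automorphisms fixing $z$, which is $\mathrm{E}$; in particular $\mathrm{E}$ is normal in $\Aut_{\varGamma}(\A)$.

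Next I would produce the splitting. The inclusion $\mathrm{T} \hookrightarrow \Aut_{\varGamma}(\A)$ is a section of $\pi$, since $\pi$ restricted to $\mathrm{T}$ is the identity: for $t = (x, y, \lambda z)$ we have $t(z) = \lambda z$, so $\pi(t) = t$. This shows $\pi$ is surjective and the sequence splits.

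Finally, to conclude the isomorphism with the semidirect product, I would verify directly the two defining properties: (i) $\mathrm{E} \cap \mathrm{T} = \{\mathrm{id}\}$, which is immediate because an element of $\mathrm{T}$ fixing $z$ must have $\lambda = 1$; and (ii) every $\varphi \in \Aut_{\varGamma}(\A)$ factors as $\varphi = (\varphi \circ t^{-1}) \circ t$ with $t = (x, y, \lambda_{\varphi} z) \in \mathrm{T}$ and $\varphi \circ t^{-1} \in \mathrm{E}$. Combined with the normality of $\mathrm{E}$, this yields $\Aut_{\varGamma}(\A) \cong \mathrm{E} \leftthreetimes \mathrm{T}$. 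There is no real obstacle here; the entire lemma is a structural consequence of Lemma~\ref{otr}, which already pinned down the $z$-component of any graded automorphism.
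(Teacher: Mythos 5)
Your proof is correct and is essentially the paper's argument repackaged: both rest entirely on Lemma~\ref{otr} to extract the scalar $\lambda_{\varphi}$, decompose $\varphi$ into a $\mathrm{T}$-part and an $\mathrm{E}$-part, and verify that $\mathrm{E}$ is normal (the paper does this by an explicit conjugation computation, you by identifying $\mathrm{E}$ as the kernel of the homomorphism $\varphi\mapsto(x,y,\lambda_{\varphi}z)$, which amounts to the same thing).
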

	\begin{proof}
		By Lemma~\ref{otr}, any graded automorphism has the form $\varphi = (f, g, \lambda z)$. It can be represented as a composition of automorphisms from $\mathrm{E}$ and~$\mathrm{T}$ via $\varphi = \tau\circ\phi$, where~$\tau = (x, y, \lambda z), \phi = (f, g, z)$. Wherein
		~$\tau\circ\phi\circ\tau^{-1} = (x, y, \lambda z)\circ(f, g, z)\circ(x, y, \lambda^{-1}z) = (\widetilde{f},\ \widetilde{g},\ \lambda^{-1}\lambda z) = (\widetilde{f},\ \widetilde{g},\ z) \in \mathrm{E}$.
	\end{proof}
	The subgroup $\mathrm{T}$ consists of linear graded automorphisms. This gives the following assertion.
	\begin{remark}
		Let $\varphi = \tau\circ\phi$, where~$\tau \in \mathrm{T}$.  Then $\varphi$ is graded-tame if and only if $\phi$ is graded-tame.
	\end{remark}
	\begin{remark}
		The plane $\Y = \mathbb{V}(z-1)$ is invariant under automorphisms from the subgroup $\mathrm{E}$. The algebra of regular functions on $Y$ is equal to~$\K[u,v]$, where~$u = x\big|_\Y;\ v = y\big|_\Y$. Therefore, there exists a homomorphism~$\alpha: \mathrm{E} \rightarrow  \Aut(\K[u,v])$. Algebraicaly $\alpha$ is given by the formula
		$$
		\alpha(f,g,z)=(f(u,v,1),g(u,v,1)).
		$$
		We say that the automorphism $\widetilde{\varphi}$ of $\K[u,v]$ can be lifted to an automorphism of $\A$ if the preimage $\alpha^{-1}(\widetilde{\varphi})$ is not empty.
	\end{remark}
	Consider the grading ${\widetilde{\varGamma}}$ of $\K[u,v]$ by the cyclic group $\Z_c$ of order $c$ such that $(\deg_{\widetilde{\varGamma}}(u), \deg_{\widetilde{\varGamma}}(v)) = (\overline{a}, \overline{b})$, where $\overline{a}$ and $\overline{b}$~are the images of $a$ and $b$ under the natural homomorphism from $\Z$ to $\Z_c$.
	
	\begin{lem}
		Let $\varphi \in \mathrm{E}$.
		Then the automorphism $\widetilde{\varphi} = \alpha(\varphi) \in \Aut{(\K[u, v])}$ preserves the grading ${\widetilde{\varGamma}}$.
	\end{lem}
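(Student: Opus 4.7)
The plan is to translate the $\Z$-homogeneity of $\varphi$ into $\Z_c$-homogeneity of $\widetilde\varphi$ by a direct computation of degrees of monomials, using the fact that $c = -\deg_\varGamma(z)$, so reducing modulo $c$ exactly kills the contribution of the $z$-exponent.

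First I would unpack what $\varphi\in\mathrm{E}$ means: $\varphi = (f,g,z)$ and $\varphi$ preserves $\varGamma$, so $\varphi(x)=f$ must lie in $\A_a$ and $\varphi(y)=g$ must lie in $\A_b$. Hence $f$ and $g$ are $\varGamma$-homogeneous of degrees $a$ and $b$ respectively. Writing $f = \sum c_{ijk}\,x^{i}y^{j}z^{k}$, every nonzero monomial satisfies $ai+bj-ck=a$, and analogously for $g$ with right-hand side $b$.

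Next I would compute $\widetilde\varphi(u) = f(u,v,1)$, which is obtained from $f$ by replacing $x,y,z$ with $u,v,1$. Each surviving monomial $u^{i}v^{j}$ (possibly a sum over several $k$'s) comes from monomials $x^{i}y^{j}z^{k}$ of $f$, and for each such we have $ai+bj = a+ck \equiv a \pmod{c}$. Thus every monomial of $f(u,v,1)$ has $\widetilde\varGamma$-degree $\overline a = \deg_{\widetilde\varGamma}(u)$, so $\widetilde\varphi(u)$ is $\widetilde\varGamma$-homogeneous of the correct degree. The same argument applied to $g(u,v,1)$ shows $\widetilde\varphi(v)$ is $\widetilde\varGamma$-homogeneous of degree $\overline b = \deg_{\widetilde\varGamma}(v)$.

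Finally I would conclude: since $\widetilde\varphi$ sends the generators $u,v$ to $\widetilde\varGamma$-homogeneous elements of the same degree as $u,v$, for any monomial $u^{i}v^{j}$ of degree $\overline{ai+bj}$ the image $f(u,v,1)^{i}g(u,v,1)^{j}$ is $\widetilde\varGamma$-homogeneous of degree $\overline{ai+bj}$, and the same holds for arbitrary $\widetilde\varGamma$-homogeneous elements by linearity. Hence $\widetilde\varphi$ preserves $\widetilde\varGamma$. There is no substantive obstacle here; the only point that needs care is verifying that several $\varGamma$-homogeneous monomials of $f$ may collapse to the same $u^{i}v^{j}$, but since they all end up with the same residue $\overline a$ modulo $c$, the sum is still $\widetilde\varGamma$-homogeneous.
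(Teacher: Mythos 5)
Your argument is correct and is essentially the same as the paper's: both reduce the $\varGamma$-homogeneity relation $ai+bj-ck=a$ (resp.\ $=b$) modulo $c$ to conclude that every monomial of $f(u,v,1)$ (resp.\ $g(u,v,1)$) has $\widetilde{\varGamma}$-degree $\overline{a}$ (resp.\ $\overline{b}$). Your extra remarks about collapsing monomials and extending to general homogeneous elements are fine but not needed beyond what the paper records.
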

	\begin{proof}
		Let $\varphi = (f, g, z)$. Suppose a monomial $x^iy^jz^k$ has a nonzero coefficient in $f$. Since $\varphi$ preserves the grading, $ai + bj - ck = a$. Then $\widetilde{\varphi} = (\widetilde{f},\widetilde{g})$, where $\widetilde{f} = f(u, v, 1)$. The monomial $x^iy^jz^k$ induces the monomial $u^iv^j$ of $\widetilde{f}$. Therefore, for every monomial $u^iv^j$ of $\widetilde{f}$ we have $i\overline{a}+j\overline{b}=\overline{a}$, that is $\deg_{\widetilde{\Gamma}}\widetilde{f}=\deg_{\widetilde{\Gamma}}u$. Similarly $\deg_{\widetilde{\Gamma}}\widetilde{g}=\deg_{\widetilde{\Gamma}}v$.
	\end{proof}
	\begin{lem}
		Suppose two polynomials $f_1$ and $f_2$ are homogeneous of degree $l$ with respect to $ \varGamma $. If restrictions of $f_1$ and $f_2$  to the plane $\Y = \mathbb{V}(z-1)$ coincide, then~$f_1$ and $f_2$ coincide on the whole space.
	\end{lem}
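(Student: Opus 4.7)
The plan is to work with the difference $h = f_1 - f_2$, which is again $\varGamma$-homogeneous of degree $l$ and vanishes on $\Y$, and to show that such an $h$ must be identically zero. The core idea is that the $\varGamma$-homogeneity rigidly ties the $z$-exponent to the $(x,y)$-exponents, so no cancellation can happen when we set $z=1$.

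More explicitly, I would write $h = \sum c_{ijk} x^i y^j z^k$ and note that whenever $c_{ijk} \neq 0$ we have $ai + bj - ck = l$. Since we are in the setting where $c > 0$ (the case we have reduced to before the lemma), for any fixed pair $(i,j)$ the equation $ck = ai + bj - l$ determines $k$ uniquely. Consequently, at most one monomial of $h$ contributes to each monomial $u^i v^j$ of $h(u,v,1) = h|_\Y$.

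Restricting to $\Y$ gives $h(u,v,1) = \sum c_{ijk} u^i v^j$, and by the uniqueness observation above, the coefficient of $u^i v^j$ in this sum equals $c_{ijk_0}$ for the unique admissible $k_0 = (ai+bj-l)/c$. The hypothesis $f_1|_\Y = f_2|_\Y$ means $h(u,v,1) = 0$, so every such $c_{ijk_0}$ vanishes; hence all $c_{ijk}$ vanish and $h = 0$, i.e., $f_1 = f_2$.

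There is no real obstacle here: the only thing to check is the uniqueness of $k$ given $(i,j)$, which is immediate from $c \neq 0$. The lemma would fail without that assumption (if $c = 0$, then setting $z=1$ loses information about the $z$-exponent and different homogeneous polynomials could agree on $\Y$), which is consistent with this lemma being stated in the regime $a,b,c > 0$ established just above.
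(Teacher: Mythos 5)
Your argument is correct. The key point --- that for a $\varGamma$-homogeneous $h=\sum c_{ijk}x^iy^jz^k$ of degree $l$ the exponent $k$ is determined by $(i,j)$ via $ck=ai+bj-l$ because $c\neq 0$, so that restriction to $z=1$ is injective on the support of $h$ --- is exactly what makes the lemma work, and you correctly identify that it fails when $c=0$. The paper reaches the same conclusion by a different, slicker route: it observes that $f_1-f_2$ is divisible by $z-1$, that $f_1-f_2$ is $\varGamma$-homogeneous, and that $z-1$ is \emph{not} $\varGamma$-homogeneous (again precisely because $\deg_\varGamma z=-c\neq 0$), whence $f_1-f_2=0$ since in a $\Z$-graded domain a nonzero homogeneous element cannot have a non-homogeneous divisor with nonzero cofactor. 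Your monomial-by-monomial count is more elementary and makes the dependence on $c\neq 0$ completely explicit; the paper's divisibility argument is shorter and generalizes immediately to restrictions to other hypersurfaces $\mathbb{V}(p)$ with $p$ non-homogeneous. Either proof is acceptable.
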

	\begin{proof}
		Since $f_1\big|_{\Y} = f_2\big|_{\Y}$, then $f_1-f_2$ is divisible by $z-1$. Polynomials $f_1$ and $f_2$ have the same degrees, hence, $f_1-f_2$ is a $\varGamma$-homogeneous element. But $z-1$ is not $\varGamma$-homogeneous. Therefore, $f_1-f_2=0$.
	\end{proof}
	
	\begin{cor}
		Homomorphism $\alpha$ is injective.
	\end{cor}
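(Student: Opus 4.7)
The corollary follows as an essentially immediate consequence of the preceding lemma, so the plan is short. I would take two automorphisms $\varphi_1=(f_1,g_1,z)$ and $\varphi_2=(f_2,g_2,z)$ in $\mathrm{E}$ with $\alpha(\varphi_1)=\alpha(\varphi_2)$ and show $\varphi_1=\varphi_2$ componentwise.

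First, I would unpack what $\alpha(\varphi_1)=\alpha(\varphi_2)$ means using the explicit formula $\alpha(f,g,z)=(f(u,v,1),g(u,v,1))$: it says that $f_1$ and $f_2$ have equal restrictions to $\Y=\mathbb{V}(z-1)$, and similarly for $g_1$ and $g_2$. Next, I would invoke that $\varphi_1,\varphi_2\in\mathrm{E}\subset\Aut_{\varGamma}(\A)$ are $\varGamma$-graded, so $f_1,f_2$ are both $\varGamma$-homogeneous of degree $\deg_{\varGamma}x=a$, and $g_1,g_2$ are both $\varGamma$-homogeneous of degree $\deg_{\varGamma}y=b$. In particular, within each pair the two polynomials share a common $\varGamma$-degree, which is exactly the hypothesis of the previous lemma.

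Applying that lemma to the pair $(f_1,f_2)$ gives $f_1=f_2$, and applying it again to $(g_1,g_2)$ gives $g_1=g_2$. Since the third coordinate of any element of $\mathrm{E}$ is fixed to be $z$, this yields $\varphi_1=\varphi_2$, as required. The only thing worth double-checking is the homogeneity claim for $f_1,f_2,g_1,g_2$; this is where being in $\mathrm{E}$ (as opposed to an arbitrary graded map) matters, but since membership in $\mathrm{E}$ already entails preserving the $\varGamma$-grading, no obstacle arises. Thus there is no real main difficulty: the corollary is a direct specialization of the previous lemma.
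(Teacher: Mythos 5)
Your proof is correct and is exactly the argument the paper intends: the corollary is placed immediately after the lemma on homogeneous polynomials agreeing on $\Y=\mathbb{V}(z-1)$, and applying that lemma to the pairs $(f_1,f_2)$ and $(g_1,g_2)$ (which are $\varGamma$-homogeneous of degrees $a$ and $b$ respectively, since automorphisms in $\mathrm{E}$ are graded) is precisely the intended deduction. No issues.
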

	It is easy to see that all automorphisms from the group $\mathrm{E}$ descend to automorphisms of the plane.
	However, not all automorphisms of the plane can be lifted to automorphisms of the three-dimensional space.
	
	\begin{lem}\label{ma}
		Let $\widetilde{\varphi} = (\widetilde{f},\widetilde{g}) \in \Aut_{\widetilde{\varGamma}}(\K[u,v])$. Then $\alpha^{-1}(\widetilde{\varphi}) = \varnothing$ if and only if the polynomial $\widetilde{f}$ contains a monomial $v^q$ such that $bq < a$ or the polynomial $\widetilde{g}$ has nonzero free term.
	\end{lem}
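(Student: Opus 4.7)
The plan is to analyze, monomial by monomial, which $\varGamma$-graded polynomials $f,g\in\A$ of degrees $a,b$ can restrict to prescribed $\widetilde{f},\widetilde{g}\in\K[u,v]$ on $\Y$, turning the lifting problem into a nonnegativity condition on certain $z$-exponents.

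For necessity, suppose $\varphi=(f,g,z)\in\mathrm{E}$ with $\alpha(\varphi)=\widetilde{\varphi}$. Each monomial $x^iy^jz^k$ of $f$ satisfies $ai+bj-ck=a$, so, since $c>0$, the exponent $k=(ai+bj-a)/c$ is uniquely determined by $(i,j)$ and must be a nonnegative integer; in particular $ai+bj\geq a$. Setting $z=1$ is therefore a bijection on the monomials of $f$, so every monomial $u^iv^j$ of $\widetilde{f}$ obeys $ai+bj\geq a$; specialising to $i=0$ forbids a term $v^q$ in $\widetilde{f}$ whenever $bq<a$. The same analysis of $g$ gives $ai+bj\geq b$ for every monomial of $\widetilde{g}$; using $a\geq b$, this is violated only when $(i,j)=(0,0)$, so $\widetilde{g}$ has no constant term.

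For sufficiency, assume both conditions. Lift each monomial of $\widetilde{f}$ (resp.~$\widetilde{g}$) to $x^iy^jz^{k}$ with $k=(ai+bj-a)/c$ (resp.~$k=(ai+bj-b)/c$); the $\widetilde{\varGamma}$-homogeneity of $\widetilde{f},\widetilde{g}$ makes $k$ an integer, and the hypotheses make $k\geq 0$. This produces $\varGamma$-homogeneous $f,g\in\A$ of degrees $a,b$ with $f|_\Y=\widetilde{f}$ and $g|_\Y=\widetilde{g}$, hence a $\varGamma$-graded ring endomorphism $\varphi=(f,g,z)$ with $\alpha(\varphi)=\widetilde{\varphi}$.

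The remaining task, and the step I expect to be the principal obstacle, is to verify that $\varphi$ is actually an automorphism. My plan is to show that $\widetilde{\varphi}^{-1}=(\widetilde{F},\widetilde{G})$ also satisfies the lifting hypotheses; the same construction then yields $\varphi'=(F,G,z)$, and the preceding uniqueness lemma (two $\varGamma$-homogeneous polynomials of equal degree agreeing on $\Y$ coincide) forces $\varphi\circ\varphi'=\varphi'\circ\varphi=\mathrm{id}$. That $\widetilde{G}$ has no constant term is immediate: the hypotheses give $\widetilde{f}(0,0)=\widetilde{g}(0,0)=0$, so $\widetilde{\varphi}$ fixes $(0,0)$, hence so does $\widetilde{\varphi}^{-1}$. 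For the condition on $\widetilde{F}$, I would substitute $u=0$ into $\widetilde{F}(\widetilde{f},\widetilde{g})=u$ and compare $v$-orders. The hypotheses plus the $\widetilde{\varGamma}$-grading force the $v$-order of $\widetilde{f}(0,v)$ to be $\geq \lceil a/b\rceil$ and of $\widetilde{g}(0,v)$ to be $\geq 1$; moreover $J(\widetilde{\varphi})\in\K^\times$, together with the fact that the coefficient of $v$ in $\widetilde{f}$ vanishes when $a>b$, pins the $v$-order of $\widetilde{g}(0,v)$ to exactly $1$ (the case $a=b$ collapses because then $\lceil a/b\rceil=1$). A routine order comparison in $\widetilde{F}(\widetilde{f}(0,v),\widetilde{g}(0,v))=0$ then forces the $v$-order of $\widetilde{F}(0,v)$ to be $\geq \lceil a/b\rceil$, which is exactly the absence of monomials $v^q$ with $bq<a$ from $\widetilde{F}$.
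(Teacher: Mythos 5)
Your necessity argument is exactly the paper's: $\varGamma$-homogeneity of $f$ (resp.\ $g$) pins the $z$-exponent of each monomial $x^iy^jz^k$ to $(ai+bj-a)/c$ (resp.\ $(ai+bj-b)/c$), and nonnegativity of that exponent fails precisely for the monomials named in the statement; the monomial-by-monomial lift in the converse direction is also the paper's construction. Where you genuinely go beyond the paper is in insisting that the lifted endomorphism $(f,g,z)$ be shown to be an automorphism before it can be called an element of $\alpha^{-1}(\widetilde{\varphi})$ --- the paper's proof is silent on this, and it is not automatic (the Jacobian of $(f,g,z)$ is a $\varGamma$-homogeneous element of degree $0$ restricting to $J(\widetilde{\varphi})$ on $\Y$, hence a nonzero constant, but a constant Jacobian alone does not settle invertibility). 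Your route --- show that $\widetilde{\varphi}^{-1}=(\widetilde F,\widetilde G)$ again satisfies the lifting conditions, lift it as well, and use the preceding uniqueness lemma to identify both composites with the identity --- is sound, and the $v$-order computation you sketch does close: for $a>b$ the condition on $\widetilde f$ kills its $v$-coefficient, so $J(\widetilde{\varphi})(0,0)\neq 0$ forces $\mathrm{ord}_v\,\widetilde g(0,v)=1$; every term of $\widetilde F(\widetilde f(0,v),\widetilde g(0,v))$ coming from a monomial of $\widetilde F$ divisible by $u$ has $v$-order at least $\lceil a/b\rceil$; hence for $q<\lceil a/b\rceil$ the coefficient of $v^q$ in $\widetilde F(\widetilde f(0,v),\widetilde g(0,v))=0$ is the coefficient of the lowest surviving monomial $v^{q_0}$ of $\widetilde F(0,v)$ times a nonzero constant, forcing those coefficients to vanish; and the case $a=b$ reduces to $\widetilde F(0,0)=0$, which follows since $\widetilde{\varphi}$, and therefore $\widetilde{\varphi}^{-1}$, fixes the origin. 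In short, your proof is correct and is in fact more complete than the one printed in the paper.
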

	\begin{proof}
		Let assume that $\alpha^{-1}(\widetilde{\varphi}) = \varphi = (f,g,z)$. For each monomial $u^pv^q$ with a nonzero coefficient $\lambda$ in $f\big|_{\Y}$ there is a number $t = \frac{ap+bq-a}{c}$ such that a monomial $x^py^qz^t$ is $\varGamma$-homogeneous of degree $a$. Since $ap + bq \equiv a(\mathrm{mod}\ c)$ number $\frac{ap+bq-a}{c} = t$ is integer. This monomial has the same coefficient $\lambda$ in $f$. Moreover, if a monomial $u^pv^q$ has zero coefficient in $f\big|_{\Y}$, then the monomial $x^py^qz^t$ has zero coefficient in $f$ for each $t$. Finally $t = \frac{ap+bq-a}{c}$ is negative if and only if $p = 0$ and $bq < a$.
		
		Similarly for each monomial $u^pv^q$ in $\widetilde{g}$ there is a number $t = \frac{ap+bq-b}{c}$ and since $a \geq b$ this number negative if and only if $p=q=0$.
	\end{proof}
	
	Since $\gcd(b,c) = 1$ there exists $q$ such that $bq \equiv a (\mathrm{mod}\ c)$. Let us put $\widehat{q} = \max\left\lbrace q \in \Z\big| bq \equiv a (\mathrm{mod}\ c), bq < a\right\rbrace $.
	
	\begin{cor}\label{whq}
		Fix a positive integer $q$ such that $bq \equiv a (\mathrm{mod}\ c)$. Consider a $\widetilde{\varGamma}$-graded automorphism $\widetilde{\varphi} = (u + \lambda v^q,v)$.
		The following conditions are equivalent:\\
		
		$(1)$ $\alpha^{-1}(\widetilde{\varphi}) = (x + \lambda y^qz^t,y,z)$ for some $t \geq 0$.\\
		
		$(2)$ $q > \widehat{q}$.
	\end{cor}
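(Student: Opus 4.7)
The plan is to deduce this directly from Lemma~\ref{ma} together with an elementary arithmetic argument about the arithmetic progression of $q$'s satisfying $bq\equiv a\pmod c$. First I unpack Lemma~\ref{ma} for the specific automorphism $\widetilde{\varphi} = (u+\lambda v^q, v)$. Here $\widetilde{g}=v$ has zero constant term, so the only way $\alpha^{-1}(\widetilde{\varphi})$ can be empty is that $\widetilde{f}=u+\lambda v^q$ contains a monomial $v^q$ with $bq<a$; this degenerates to the bare condition $bq<a$. Thus $\alpha^{-1}(\widetilde{\varphi})\neq\varnothing$ if and only if $bq\geq a$, and since $\alpha$ is injective by the preceding corollary, the preimage (when nonempty) is uniquely determined.

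Next I extract its explicit shape by the lifting recipe used in the proof of Lemma~\ref{ma}: every monomial $u^iv^j$ in $\widetilde{f}$ lifts to $x^iy^jz^t$ in $f$ with $t=(ai+bj-a)/c$. Applied to the two monomials $u$ and $\lambda v^q$ of $\widetilde f$, this produces $x$ (with $t=0$) and $\lambda y^qz^t$ with $t=(bq-a)/c$. The congruence $bq\equiv a\pmod c$ makes $t$ an integer, and the inequality $bq\geq a$ makes it nonnegative, so the lift is exactly $(x+\lambda y^qz^t,\,y,\,z)$ with $t\geq 0$. This is condition (1).

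It remains to translate $bq\geq a$ into $q>\widehat{q}$. Because $\gcd(b,c)=1$, the solutions in $q$ of $bq\equiv a\pmod c$ form a single arithmetic progression with common difference $c$, and on this progression the map $q\mapsto bq$ is strictly increasing with step $bc$. Therefore the sets $\{q:bq\equiv a\!\pmod c,\ bq<a\}$ and $\{q:bq\equiv a\!\pmod c,\ bq\geq a\}$ are consecutive halves of the progression, separated by the single jump from $\widehat{q}$ to $\widehat{q}+c$; so $bq\geq a$ is equivalent to $q\geq\widehat{q}+c$, i.e.\ to $q>\widehat{q}$. Combining the three steps yields (1)$\Leftrightarrow$(2). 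I do not anticipate a real obstacle here: the substance is already contained in Lemma~\ref{ma} and the injectivity of $\alpha$, and the only point requiring care is to verify that $t=(bq-a)/c$ is both integral (from the congruence) and nonnegative (from the inequality) at the same time.
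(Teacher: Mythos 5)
Your argument is correct and is exactly the derivation the paper intends (the corollary is stated without proof as an immediate consequence of Lemma~\ref{ma}, the injectivity of $\alpha$, and the definition of $\widehat{q}$). Your careful check that $bq\geq a$ is equivalent to $q>\widehat{q}$ along the arithmetic progression of solutions of $bq\equiv a\ (\mathrm{mod}\ c)$, and that $t=(bq-a)/c$ is simultaneously integral and nonnegative, supplies precisely the omitted details.
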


	
	\begin{prop}\label{sz}
		If $\widehat{q} \leqslant 0$, then all graded automorphisms of the algebra $\K[u,v]$ can be lifted to graded automorphisms of $\A$.
	\end{prop}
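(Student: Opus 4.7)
The plan is to decompose the given $\widetilde{\varGamma}$-graded automorphism $\widetilde{\varphi}$ of $\K[u,v]$ into graded elementary and linear factors via Proposition~\ref{tgt}, then lift each factor individually by invoking Corollary~\ref{whq}. Since the homomorphism $\alpha\colon\mathrm{E}\to\Aut(\K[u,v])$ is injective, the composition of lifts of the factors is automatically a lift of $\widetilde{\varphi}$ itself, so it suffices to lift each factor separately.

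First I would use Remark~\ref{mr} to reduce elementary factors to the monomial form $(u+\lambda v^q,v)$ or $(u,v+\mu u^p)$. For $(u+\lambda v^q,v)$ the grading constraint forces $bq\equiv a\pmod c$, and since $q$ is a positive integer while $\widehat{q}\leqslant 0$, we have $q>\widehat{q}$; Corollary~\ref{whq} then produces the lift $(x+\lambda y^q z^t,y,z)\in\mathrm{E}$ with $t=(bq-a)/c\geqslant 0$. The symmetric factor $(u,v+\mu u^p)$ is lifted by the analogous argument: the grading gives $ap\equiv b\pmod c$ with $p$ a positive integer, so the free-term condition of Lemma~\ref{ma} holds automatically and a suitable lift $(x,y+\mu x^p z^s,z)$ with $s=(ap-b)/c\geqslant 0$ sits in $\mathrm{E}$. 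For a graded linear factor I would split into two subcases: if $a\not\equiv b\pmod c$ then it must be diagonal and lifts trivially by replacing $u,v$ with $x,y$ and fixing $z$; if $a\equiv b\pmod c$, the hypothesis $\widehat{q}\leqslant 0$ forces $a=b$ (otherwise $q=1$ would witness $\widehat{q}\geqslant 1$), and once again the naive lift that fixes $z$ is $\varGamma$-homogeneous.

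The main obstacle is the verification that no factor produced by Proposition~\ref{tgt} violates Lemma~\ref{ma}'s lifting criterion. The assumption $\widehat{q}\leqslant 0$ enters precisely to exclude the dangerous exponents $q$ with $bq<a$ and $bq\equiv a\pmod c$ from appearing in elementary factors, and in a subtler way to rule out the cross-term configuration $a\equiv b\pmod c$ with $a>b$ in the graded linears. Once these observations are in place, composing the lifts of all factors yields the desired $\varGamma$-graded automorphism of $\A$ whose image under $\alpha$ is $\widetilde{\varphi}$.
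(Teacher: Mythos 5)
Your argument is essentially the paper's own proof, which consists of a one\-/line appeal to Proposition~\ref{tgt} (to decompose $\widetilde{\varphi}$ into graded elementary and linear factors) followed by Corollary~\ref{whq} (to lift each factor), combined with the injectivity of $\alpha$. In fact your write-up is more careful than the paper's: you explicitly treat the linear factors and the factors of the form $(u,v+\mu u^p)$ via Lemma~\ref{ma}, which the paper's bare citation of Corollary~\ref{whq} does not literally cover, so the proposal is correct and matches the intended argument.
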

	\begin{proof}
		All graded automorphisms of the algebra $\K[x,y,z]$ are graded by Proposition \ref{tgt}, and by Corollary \ref{whq} all of them can be lifted to automorphisms of the space.
	\end{proof}
	
	Given a polynomial $F$. Let us denote by~$L(F)$ the linear polynomial obtained by removing all nonlinear terms in $F$.
	
	In order to prove Proposition \ref{re} we need the following lemma.\\
	
	\begin{lem}\label{tra}
		Let $\xi_1 \in \Aut_{\widetilde{\varGamma}}(\K[u,v])$ is an elementary automorphism, $\xi_2 \in \Aut_{\widetilde{\varGamma}}(\K[u,v])$ is a linear automorphism. Then $\xi_2\circ\xi_1 = \zeta_2\circ\zeta_1\circ\zeta_0$, where $\zeta_0$ and $\zeta_2$ are linear automorphisms, $\zeta_1$ is an elementary automorphism, and $\zeta_2(u) = \lambda u$.
	\end{lem}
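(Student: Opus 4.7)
The plan is to factor the linear automorphism $\xi_2$ as $\xi_2 = \zeta_2 \circ \sigma$, where $\zeta_2$ satisfies $\zeta_2(u) = \lambda u$ and $\sigma$ is one of a short list of simple graded linear automorphisms (the identity, a shear $(u + cv,\ v)$, or the swap $(v,u)$), and then to rewrite $\sigma \circ \xi_1$ as $\zeta_1 \circ \zeta_0$ with $\zeta_1$ elementary and $\zeta_0$ linear.

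If $\overline{a} \ne \overline{b}$ in $\Z_c$ (with $\overline{a} = \deg_{\widetilde{\varGamma}} u$ and $\overline{b} = \deg_{\widetilde{\varGamma}} v$), then every graded linear automorphism of $\K[u,v]$ is forced to be diagonal: $\xi_2 = (\alpha u,\ \delta v)$, and the choice $\zeta_0 = \mathrm{id}$, $\zeta_1 = \xi_1$, $\zeta_2 = \xi_2$ works immediately, since $\zeta_2(u) = \alpha u$. Otherwise $\overline{a} = \overline{b}$ and $\xi_2 = (\alpha u + \beta v,\ \gamma u + \delta v)$ is an arbitrary element of $\mathrm{GL}_2$, while the elementary $\xi_1$ has one of the two shapes $(u + F(v),\ v)$ or $(u,\ v + G(u))$ with $F, G$ homogeneous of the common degree $\overline{a} = \overline{b}$.

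I would then verify four sub-cases by direct computation. (i) $\xi_1 = (u + F(v),\ v)$ with $\delta \ne 0$: take $\sigma = (u + (\beta/\delta)\,v,\ v)$, $\zeta_2 = ((\alpha\delta - \beta\gamma)/\delta \cdot u,\ \gamma u + \delta v)$, $\zeta_0 = \mathrm{id}$, and $\zeta_1 = \sigma \circ \xi_1 = (u + (\beta/\delta)\,v + F(v),\ v)$, which remains elementary because $(\beta/\delta)\,v + F(v)$ is homogeneous of degree $\overline{a}$. (ii) Same form of $\xi_1$ but $\delta = 0$ (so $\beta\gamma \ne 0$): take $\sigma = (v, u)$, $\zeta_2 = (\gamma u,\ \alpha u + \beta v)$, and use $\sigma \circ \xi_1 = (v + F(u),\ u) = (u,\ v + F(u)) \circ (v, u)$. (iii) $\xi_1 = (u,\ v + G(u))$ with $\beta = 0$: since $\xi_2(u) = \alpha u$ already, just take $\zeta_0 = \mathrm{id}$, $\zeta_1 = \xi_1$, $\zeta_2 = \xi_2$. (iv) $\xi_1 = (u,\ v + G(u))$ with $\beta \ne 0$: take $\zeta_0 = (v,\ u + (\delta/\beta)\,v)$ (invertible with $\det = -1$), $\zeta_1 = (u + G(v),\ v)$, and $\zeta_2 = ((\beta\gamma - \alpha\delta)/\beta \cdot u,\ \alpha u + \beta v)$; a short computation gives $\zeta_2 \circ \zeta_1 \circ \zeta_0(v) = \gamma u + \delta v + G(\alpha u + \beta v)$, matching $\xi_2 \circ \xi_1$. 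In every case the hypothesis $\overline{a} = \overline{b}$ guarantees that $\zeta_0, \zeta_1, \zeta_2$ are $\widetilde{\varGamma}$-graded.

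The main obstacle is sub-case (iv), where the shear-and-absorb trick of (i) fails: the resulting $\sigma \circ \xi_1$ has the form $(u + (\beta/\delta)\,v,\ v + G(u + (\beta/\delta)\,v))$, whose second coordinate depends non-linearly on both $u$ and $v$ and hence cannot be written as an elementary preceded by a linear. The remedy is to let $\zeta_0$ essentially swap the roles of $u$ and $v$, which converts the $v$-shifting elementary $\xi_1$ into the $u$-shifting elementary $\zeta_1 = (u + G(v),\ v)$, after which matching the coefficient $\lambda = (\beta\gamma - \alpha\delta)/\beta$ in $\zeta_2(u) = \lambda u$ produces exactly the linear tail needed.
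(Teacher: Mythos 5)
Your proof is correct and follows essentially the same route as the paper: factor the linear automorphism $\xi_2$ into a $u$-diagonal part composed with a simple shear or swap, then push that shear/swap through the elementary $\xi_1$, splitting into cases according to the shape of $\xi_1$ and the vanishing of one matrix entry. The only real difference is the composition convention --- you substitute $\xi_2$ into $\xi_1$ (consistent with the paper's introduction), whereas the paper's own proof of this lemma applies $\xi_1$ first as a point map, which is why you condition on $\delta$ and $\beta$ where the paper conditions on $B$ and $A$; your computations check out under your convention.
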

	\begin{proof}
		Let $\xi_2$ be given by 
		$$\begin{pmatrix}
		u\\
		v
		\end{pmatrix}
		\mapsto
		\begin{pmatrix}
		A& B\\
		C& D
		\end{pmatrix}
		\begin{pmatrix}
		u\\
		v
		\end{pmatrix}
		.$$ If $B = 0$, then $\zeta_0 = id,\ \zeta_1 = \xi_1,\ \zeta_2 = \xi_2$. Now let $B \neq 0$. Consider the case $A \neq 0$. Then
		$$
		\begin{pmatrix}
		A& B\\
		C& D
		\end{pmatrix} = \begin{pmatrix}
		A& 0\\
		C& D - \frac{B}{A}C
		\end{pmatrix}
		\begin{pmatrix}
		1& \frac{B}{A}\\
		0& 1
		\end{pmatrix}.
		$$
		Let 
		$$\theta_1\colon
		\begin{pmatrix}
		u\\
		v
		\end{pmatrix}\mapsto
		\begin{pmatrix}
		1& \frac{B}{A}\\
		0& 1
		\end{pmatrix}
		\begin{pmatrix}
		u\\
		v
		\end{pmatrix},\qquad
		\theta_2\colon
		\begin{pmatrix}
		u\\
		v
		\end{pmatrix}\mapsto
		\begin{pmatrix}
		A& 0\\
		C& D - \frac{B}{A}C
		\end{pmatrix}
		\begin{pmatrix}
		u\\
		v
		\end{pmatrix}.
		$$
		If $\xi_1 = (u + f(v), v)$, then $\theta_1\circ\xi_1 = \left(u + f(v) + \frac{B}{A}v, v\right)$ is an elementary automorphism. We can put $\zeta_0 = id,\ \zeta_1 = \theta_1\circ\xi_1,\ \zeta_2 = \theta_2$.\\
		
		Now let $\xi_1 = (u, v + f(u))$. 
		Then, 
		$$\theta_1\circ\xi_1 = \left(u + \frac{B}{A}v + \frac{B}{A}f(u), v + f(u)\right) = \eta_2\circ\eta_1\circ\eta_0,$$ 
		where 
		$$ \eta_0 = \left(u + \frac{B}{A}v, u\right),\  {\eta_1} = \left(u+ \frac{B}{A}f(v),v\right),\ \eta_2 = \left(u, \frac{A}{B}(u - v)\right).$$
		Let's check this:
		$$
		\begin{pmatrix}
		u\\
		v
		\end{pmatrix}
		\xrightarrow{\eta_0}
		\begin{pmatrix}
		u + \frac{B}{A}v\\
		u
		\end{pmatrix}
		\xrightarrow{\eta_1}
		\begin{pmatrix}
		u + \frac{B}{A}v + \frac{B}{A}f(u)\\
		u
		\end{pmatrix}
		\xrightarrow{\eta_2}
		\begin{pmatrix}
		u + \frac{B}{A}v + \frac{B}{A}f(u)\\
		v + f(u)
		\end{pmatrix}.
		$$
		So, $\xi_2\circ\xi_1 = \theta_2\circ\theta_1\circ\xi_1 = \theta_2\circ\eta_2\circ\eta_1\circ\eta_0$.
		Let us put $\zeta_0 = \eta_0,\ \zeta_1 = \eta_1,\ \zeta_2 = \theta_2\circ\eta_2$.
		
		Now we concider the case $A = 0$. We have
		$$
		\begin{pmatrix}
		0& B\\
		C& D
		\end{pmatrix} =
		\begin{pmatrix}
		B& 0\\
		D& C
		\end{pmatrix}
		\begin{pmatrix}
		0& 1\\
		1& 0
		\end{pmatrix}.$$ 
		Denote 
		$$\theta_1 \colon 
		\begin{pmatrix}
		u\\
		v
		\end{pmatrix}\mapsto
		\begin{pmatrix}
		0& 1\\
		1& 0
		\end{pmatrix}\begin{pmatrix}
		u\\
		v
		\end{pmatrix},\qquad
		\theta_2\colon\begin{pmatrix}
		u\\
		v
		\end{pmatrix}\mapsto 
		\begin{pmatrix}
		B& 0\\
		D& C
		\end{pmatrix}
		\begin{pmatrix}
		u\\
		v
		\end{pmatrix}.
		$$
		So, $\xi_2\circ\xi_1 = \theta_2\circ\theta_1\circ\xi_1 = \theta_2\circ\xi_1'\circ\theta_1$, where ${\xi_1'} = (u, v + f(u))$, if $\xi_1 = (u + f(v), v)$ and~$\xi_1' = (u + f(v), v)$, if $\xi_1 = (u, v + f(u))$. In this case we put $\zeta_0 = \theta_1,\ \zeta_1 = \xi_1',\ \zeta_2 = \theta_2$.
	\end{proof}
	
	\begin{prop}\label{re}
		Suppose $\widehat{q} = 1$. Then any graded automorphism of $\A$ is graded-tame.
	\end{prop}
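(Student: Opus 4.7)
The plan is to imitate the Newton-polygon induction used in the Jung theorem (as in the proof of Proposition~\ref{tgt}), but making sure at every step that the auxiliary automorphism used to shrink the polygon is not merely $\widetilde\varGamma$-graded but is in fact liftable -- that is, lies in the image under $\alpha$ of a graded elementary or linear automorphism of $\A$. After the standard reduction $\varphi=\tau\circ\phi$ with $\tau\in\mathrm T$ and $\phi\in\mathrm E$, and passing to $\widetilde\phi=\alpha(\phi)$, it suffices to show $\widetilde\phi\in H$, where $H$ is the image under $\alpha$ of the graded-tame subgroup of $\mathrm E$. Unfolding the definitions, $H$ is generated by the diagonal scalings $(\alpha u,\beta v)$, the $v$-type elementaries $(u,v+\sum \mu_p u^p)$ with $p\equiv 1 \pmod c$ and $p\ge 1$ (automatically liftable), and the $u$-type elementaries $(u+\sum \lambda_q v^q,v)$ with $q\equiv 1\pmod c$ and $q\ge 1+c$ (the lower bound coming from Corollary~\ref{whq} specialized to $\widehat q=1$). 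Since $\alpha$ is injective, $\widetilde\phi\in H$ forces $\phi$ to be graded-tame, and $\varphi=\tau\circ\phi$ then inherits graded-tameness.

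The induction is on the area of the Newton polygon $N(\widetilde f)$ of the first coordinate of $\widetilde\phi$. In the base case when the area is zero, $\widetilde f$ is linear in a single variable, and Lemma~\ref{ma} rules out $\widetilde f=\lambda v+\mu$, so $\widetilde f=\lambda u+\mu$ with $\mu=0$ (forced by gradedness when $c>1$ and by Lemma~\ref{ma} when $c=1$). The automorphism condition then forces $\widetilde g=\alpha v+h(u)$, and the graded/liftability constraints make $h$ supported on exponents $p\equiv 1\pmod c$, $p\ge 1$. The decomposition $\widetilde\phi=(\lambda u,\alpha v)\circ(u,v+\alpha^{-1} h(u))$ exhibits $\widetilde\phi$ as a product of two generators of $H$.

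For the inductive step I will run the Jung LND analysis on $\widetilde f$: it produces a weight $(q,p)$ such that the top $\widetilde\varGamma$-homogeneous component is $\widehat f=C(v^q-\lambda u^p)^k$ with $p=1$ or $q=1$, and the computation from Proposition~\ref{tgt} forces $p\equiv q\equiv 1\pmod c$. The main obstacle lies precisely in the case $p=q=1$: the natural Jung reduction $\psi_u=(u+\lambda^{-1}v,v)$ has $q=\widehat q=1$ and therefore does \emph{not} lift -- this is the single place where a naive adaptation of Jung's argument would break. The resolution is to prefer, whenever $q=1$, the $v$-type reduction $\psi_v=(u,v+\lambda^{1/k} u^p)$, which is always liftable because $p\ge 1$; and whenever $q>1$ (which forces $p=1$ and hence $q\ge 1+c>\widehat q$), the $u$-type reduction $\psi_u=(u+\lambda^{-1}v^q,v)$, which is liftable by Corollary~\ref{whq}. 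In either case $\psi\in H$, the composition $\widetilde\phi\circ\psi$ remains in $\alpha(\mathrm E)$ (the substitutions introduce no bare $v$ in the first coordinate nor a free term in the second), and the Newton polygon area strictly decreases by Jung's computation. The inductive hypothesis then yields $\widetilde\phi\circ\psi\in H$, whence $\widetilde\phi\in H$.
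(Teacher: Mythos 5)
Your argument is correct, but it reaches the conclusion by a genuinely different route than the paper. The paper first applies Proposition~\ref{tgt} to get \emph{some} decomposition of $\widetilde\varphi$ into $\widetilde\varGamma$-graded elementary and linear factors, observes that $\widehat q=1$ forces $L(\widetilde\varphi(u))=\lambda u$, and then uses Lemma~\ref{tra} in a combinatorial induction to reshuffle the factors until no elementary factor of $u$-type contains the single non-liftable monomial $v^{1}$; after that every factor lifts by Lemma~\ref{ma}. You instead go back inside the Makar-Limanov/Jung Newton-polygon induction and build a liftable decomposition from scratch: at each step the top component is $C(v^q-\lambda u^p)^k$ with $p=1$ or $q=1$, and you correctly isolate the only dangerous configuration, namely $q=1$, where the usual reduction $(u+\lambda^{-1}v,v)$ fails to lift precisely because $\widehat q=1$; your fix --- always use the $v$-type reduction $(u,v+\lambda u^p)$ when $q=1$ (it lifts for every $p\geq 1$ since $a\geq b$), and note that $q\geq 2$ forces $p=1$ and makes the $u$-type reduction liftable by Corollary~\ref{whq} --- is exactly the right dichotomy, and the area of $N(\widetilde f)$ still strictly decreases. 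Both proofs ultimately rest on the same two facts ($(u+\lambda v,v)$ is the unique bad graded elementary; $(u,v+\lambda u^p)$ is always good), but yours trades the factor-reshuffling machinery of Lemma~\ref{tra} for a re-verification that the modified Jung induction terminates; this buys a more self-contained and arguably more transparent argument, at the cost of re-opening the Newton-polygon analysis rather than quoting Proposition~\ref{tgt} as a black box. Two small repairs: the coefficient in your $v$-type reduction should be $\lambda$ rather than $\lambda^{1/k}$, since $C\bigl((v+\lambda u^p)-\lambda u^p\bigr)^k=Cv^k$ already kills the vertex; and the claim that $\widetilde\phi\circ\psi$ stays in $\alpha(\mathrm E)$ needs no monomial bookkeeping --- it is immediate because $\psi=\alpha(\Psi)$ for some graded elementary $\Psi\in\mathrm E$ and $\alpha$ is a group homomorphism.
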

	\begin{proof}
		Let $\varphi = (f, g, z) \in E.$ Denote $\widetilde{\varphi} = \alpha(\varphi)$.
		Since $\widehat{q} = 1$, we have $b \equiv a (\mathrm{mod}\ c)$, and $b < a$. 
		This means that $L(f) = \lambda x, \lambda \neq 0$. Hence $L(\widetilde{\varphi}(u)) = \lambda u$. By Proposition~\ref{tgt},~$\widetilde{\varphi}$~ can be decomposed into a composition of linear and elementary automorphisms also preserving the grading: $\widetilde{\varphi} =\xi_n\circ\ldots\circ \xi_1$. We can assume that $\xi_1$ is linear. Let us prove by induction on $2\leq t\leq n-1$ that $\widetilde{\varphi}$ can be decomposed into another composition $\xi'_n\circ\ldots\circ \xi'_1$ of graded linear and elementary automorphisms such that for all $n-t\leq j\leq n$ we have $L(\xi'_j(u))=\lambda_j u$ and $\xi_1$ is linear.\\
		
		{\it Base.}\\
		For $t = 0$ the assertion is obvious.\\
		
		{\it Step.} Let us put $k = n - t$.
		By inductive hypothesis, we have $\widetilde{\varphi} =\xi_n\circ\ldots\circ \xi_1$ and for all $k < j\leq n$ we have $L(\xi_j(u))=\lambda_j u$ and $\xi_1$ is linear.
		
		If $\xi_{k}$ is elementary, then either $L(\xi_{k}(u))= u$ or $\xi_{k}=(u + \lambda v + f(v), v)$. In the second case let us put $\tau=(u + \lambda v, v)$. We can replase $\xi_{k}$ by $\xi_{k}'=\xi_{k}\circ \tau^{-1}$ and $\xi_{k-1}$ by $\xi_{k-1}'=\tau\circ\xi_{k-1}$. In both cases $L(\xi_{k-1}'(u))= u$.
		
		If $\xi_{k}$ is linear and $k>1$, then we can assume that $\xi_{k-1}$ is elementary and $k>2$. By Lemma \ref{tra}, the composition $\xi_{k}\circ\xi_{k-1}$ can always be rewritten as $\zeta_2\circ\zeta_1\circ\zeta_0$, where $\zeta_0$ and $\zeta_2$ are linear, $L(\zeta_2(u)) = \lambda' u$, and $\zeta_1$ is elementary. Then we replace $\xi_{k}\circ\xi_{k-1}\circ\xi_{k-2}$ by $\xi_{k}'\circ\xi_{k-1}'\circ\xi_{k-2}'$,\ where $\xi'_{k} = \zeta_2,\ \xi'_{k-1} = \zeta_1,\ \xi'_{k-2} = (\zeta_0\circ\xi_{k-2})$. Then $L(\xi_{k}'(u))=L(\zeta_2(u)) = \lambda'_{k} u$. Finally if $k = 1$ by induction hypothesis we assume that $\xi'_n\circ\ldots\circ\xi'_2(u) = \lambda_2\cdot\ldots\cdot\lambda_n u$. Since $L(\widetilde{\varphi}(u)) = \lambda u$ we have $\xi'_1(u) = \frac{\lambda}{\lambda_2\cdot\ldots\lambda_n}u$. 
		\\
		\\
		Now $L(\xi_k(u))=\lambda_k u$ so graded elementary automorphism $\xi_k$ can be lifted to graded elementary automorphism of the space by Lemma \ref{ma}
	\end{proof}
	
	Let $I = (u,v) \lhd \K[u,v]$ be the ideal generated by $u$ and $v$.
	
	\begin{lem}\label{ga}
		Let $\varphi \in \mathrm{E}$ is graded-tame automorphism and let $\widetilde{\varphi} = \alpha(\varphi)$. Then $\widetilde{\varphi}(u) = \lambda u + G$, where $G \in I^{\widehat{q} + c}$.
	\end{lem}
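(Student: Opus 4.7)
The plan is to decompose $\varphi$ into graded linear and elementary automorphisms of $\A$ and then track the image of $u$ factor by factor on the plane $\Y$. First, I would argue that one may assume every factor in the decomposition already lies in $\mathrm{E}$. Since $\deg_{\varGamma}(z) = -c < 0$ is distinct from the nonnegative degrees $a, b$, a graded elementary cannot move $z$ (its defining polynomial would have to be a nonzero homogeneous polynomial of degree $-c$ in $\K[x,y]$, which is impossible), while a graded linear factors as a $z$-scaling composed with a linear fixing $z$. A short computation shows that conjugation moves any $z$-scaling $(x,y,\nu z)$ past a Type~1 or Type~2 graded elementary at the cost of rescaling the coefficients of its monomials by powers of $\nu$. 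Hence all $z$-scalings can be collected at one end of the word, and since $\varphi(z)=z$ they must cancel. Therefore $\varphi = \rho_n \circ \cdots \circ \rho_1$ with each $\rho_i \in \mathrm{E}$ graded linear or elementary.

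Next, I would analyze $\widetilde{\rho_i} = \alpha(\rho_i)$ case by case. A diagonal linear gives $\widetilde{\rho_i}(u) = \lambda u$, which is already of the required form. If $a = b$, then the residue class $\{q : bq \equiv a \pmod{c}\}$ is $q \equiv 1 \pmod{c}$ and the largest $q<1$ in it is $1-c$, so $\widehat{q}+c = 1$ and $I^{\widehat{q}+c} = I$; hence a mixing linear $\widetilde{\rho_i}(u) = \alpha u + \beta v$ still satisfies the claim with $G = \beta v$. For an elementary $(\lambda x + F(y,z),\, y,\, z)$ with $F$ homogeneous of degree $a$, each monomial $y^p z^q$ in $F$ satisfies $p,q \geq 0$ and $bp - cq = a$, forcing $bp \geq a$ and $bp \equiv a \pmod{c}$; since $\gcd(b,c) = 1$, these conditions pin $p$ down to the arithmetic progression $\widehat{q}+c,\ \widehat{q}+2c,\ldots$, so restriction to $\Y$ yields $\widetilde{\rho_i}(u) = \lambda u + G(v)$ with $G \in (v^{\widehat{q}+c}) \subseteq I^{\widehat{q}+c}$. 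An elementary of the form $(x, \mu y + F(x,z), z)$ fixes $u$ altogether.

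In every case $\widetilde{\rho_i}(0,0) = (0,0)$, so $\widetilde{\rho_i}$ preserves the maximal ideal $I$ and hence $I^{\widehat{q}+c}$. An induction on $k$ now shows that $\psi_k := \widetilde{\rho_k} \circ \cdots \circ \widetilde{\rho_1}$ satisfies $\psi_k(u) = \Lambda_k u + \Gamma_k$ with $\Gamma_k \in I^{\widehat{q}+c}$: applying $\widetilde{\rho_{k+1}}$ to $\Lambda_k u + \Gamma_k$ produces $\Lambda_k \lambda_{k+1} u + \bigl(\Lambda_k G_{k+1} + \widetilde{\rho_{k+1}}(\Gamma_k)\bigr)$, whose correction term lies in $I^{\widehat{q}+c}$ by the single-factor computation and the invariance just noted. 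Taking $k = n$ completes the proof. The only substantive computation is the congruence analysis pinning down $\widehat{q}+c$ as the minimal admissible exponent for Type~1 elementaries; the remaining steps are routine bookkeeping once the filtration by powers of $I$ is recognized as the correct invariant.
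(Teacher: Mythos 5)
Your proof is correct and follows essentially the same route as the paper: decompose $\varphi$ into graded elementary (and linear) factors, show that each factor can only perturb $u$ by terms lying in $I^{\widehat{q}+c}$ (your congruence analysis of the exponents in a homogeneous $F(y,z)$ of degree $a$ is exactly the computation underlying Lemma~\ref{ma}, which the paper cites instead), and propagate through the composition using that every factor preserves $I$. You are in fact more careful than the paper about reducing to factors that lie in $\mathrm{E}$ and about the graded linear factors (including the $a=b$ case), but the core argument is identical.
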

	\begin{proof}
		Let $\varphi = \varphi_n\circ\ldots\circ\varphi_1$, were $\varphi_k$ is elementary graded automorphism. Then $\widetilde{\varphi} = \xi_n\circ\ldots\circ\xi_1$, were $\xi_k = \alpha(\varphi_k)$. We can assume that $\xi_k = (\lambda_k u + \mu_k v^q, \nu_k v)$ or $(\lambda_k u, \mu_k v + \nu_k u^p)$, see Remerk~\ref{mr}. By Lemma \ref{ma} if a graded automorphism of the form $(\lambda_k u + \mu_k v^q, \nu_k v)$ can be lifted to an automorphism of $\mathcal{A}$, then $q \geqslant \widehat{q} + c$.	  Since $\xi_k(v)$ and $\xi_k(u)$ have zero free terms, we have $\xi_n\circ\ldots\circ\xi_1(v)\in I$, $\xi_n\circ\ldots\circ\xi_1(u)\in I^{\widehat{q}+c}$.
	\end{proof}
	
	\begin{prop}\label{wa}
		If $\widehat{q}\geq 2$, then the grading $\varGamma$ admits a graded-wild automorphism of the algebra $\K[x,y,z]$.
	\end{prop}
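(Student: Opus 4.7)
My plan is to construct an explicit Nagata-type exponential automorphism adapted to the grading, and to detect its wildness via Lemma~\ref{ga}. Since $\widehat{q}\ge 2$ and $b\widehat{q}\equiv a\pmod c$ with $b\widehat{q}<a$, the integer $p:=(a-b\widehat{q})/c$ is positive, so $a=b\widehat{q}+pc$ with $p\ge 1$. I would form the $\varGamma$-homogeneous element $\Delta=y^{\widehat{q}}-xz^p$ of degree $b\widehat{q}$, together with the $\varGamma$-homogeneous derivation
$$D=\widehat{q}\,y^{\widehat{q}-1}\partial_x+z^p\partial_y,\qquad \deg_{\varGamma}D=-b-pc.$$
A direct Leibniz check yields $D(\Delta)=0=D(z)$; the integer weight $w(x^iy^jz^k):=\widehat{q}\,i+j$ is strictly decreased by one under $D$ on each monomial, so $D$ is locally nilpotent.

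To land in $\Aut_{\varGamma}(\A)$ I multiply $D$ by a compensating element of $\ker D$ of degree $b+pc$. From $\gcd(a,c)=1$ and $b\widehat{q}\equiv a\pmod c$ we get $\gcd(\widehat{q},c)=1$, so there is a smallest positive $j_0$ with $j_0\widehat{q}\equiv 1\pmod c$, and $j_0\le c$. Set $i_0=(j_0\widehat{q}-1)b/c-p$: writing $j_0\widehat{q}-1=kc$ with $k\ge 1$ gives $i_0=kb-p$, while the maximality of $\widehat{q}$ yields $b(\widehat{q}+c)\ge a$, hence $b\ge p$ and $i_0\ge 0$. Put $E=z^{i_0}\Delta^{j_0}\in\ker D$; then $ED$ is a locally nilpotent $\varGamma$-homogeneous derivation of degree $0$, so $\sigma:=\exp(ED)$ is a $\varGamma$-graded automorphism of $\A$ fixing $z$, and therefore $\sigma\in\mathrm{E}$.

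To apply Lemma~\ref{ga} I would compute $\widetilde{\sigma}=\alpha(\sigma)$ and exhibit a low-order monomial in the non-linear part of $\widetilde{\sigma}(u)$. Since $D^2(y)=0$ and $D^{\widehat{q}+1}(x)=0$, the exponential series truncate:
$$\sigma(y)=y+z^pE,\qquad \sigma(x)=x+\sum_{k=1}^{\widehat{q}}\binom{\widehat{q}}{k}E^ky^{\widehat{q}-k}z^{(k-1)p}.$$
Restricting to $\Y=\mathbb{V}(z-1)$ replaces $E$ by $(v^{\widehat{q}}-u)^{j_0}$, whence
$$\widetilde{\sigma}(u)=u+\sum_{k=1}^{\widehat{q}}\binom{\widehat{q}}{k}(v^{\widehat{q}}-u)^{j_0k}\,v^{\widehat{q}-k}.$$
Expanding by the binomial theorem, the monomial $u^{j_0}v^{\widehat{q}-1}$ can arise only for $k=1$: the exponent identity forces $\widehat{q}\,j_0(k-1)=k-1$, which fails for $k\ge 2$ because $\widehat{q}\,j_0\ge 2$. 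Thus $u^{j_0}v^{\widehat{q}-1}$ appears in $\widetilde{\sigma}(u)$ with coefficient $(-1)^{j_0}\widehat{q}\ne 0$.

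Finally, since $j_0\le c$ the total degree $j_0+\widehat{q}-1$ is strictly less than $\widehat{q}+c$, so $u^{j_0}v^{\widehat{q}-1}\in I^{j_0+\widehat{q}-1}\setminus I^{\widehat{q}+c}$. Consequently $\widetilde{\sigma}(u)-\lambda u\notin I^{\widehat{q}+c}$ for any $\lambda$, and Lemma~\ref{ga} shows that $\sigma$ cannot be graded-tame. The main technical obstacle I expect is the non-cancellation analysis: one must confirm that the choice of $j_0$ as the modular inverse of $\widehat{q}$ is exactly what degenerates the exponent equation only at $k=1$, so the witness monomial is not erased by contributions from larger $k$. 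A secondary subtlety is verifying $i_0\ge 0$, which ultimately rests on the maximality built into the definition of $\widehat{q}$.
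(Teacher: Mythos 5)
Your construction is correct, and the wildness detection at the end is exactly the paper's: both arguments exhibit the low\mbox{-}degree nonlinear monomial $u^{j_0}v^{\widehat{q}-1}$ in $\widetilde{\sigma}(u)$ (your $j_0$, the inverse of $\widehat{q}$ modulo $c$, coincides with the paper's exponent $\widehat{l}$, the least $l$ for which $(u,v+u^{l})$ is $\widetilde{\varGamma}$-graded) and then invoke Lemma~\ref{ga} together with $j_0+\widehat{q}-1<\widehat{q}+c$. Where you differ is in how the automorphism is produced. The paper works on the plane: it conjugates the triangular map $\phi=(u,v+u^{\widehat{l}})$ by $\tau=(u+v^{\widehat{q}},v)$ and then lifts $\tau^{-1}\circ\phi\circ\tau$ to $\A$ via Lemma~\ref{ma}. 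You instead build a $\varGamma$-homogeneous locally nilpotent derivation $ED$ of degree zero directly on $\K[x,y,z]$ and take $\sigma=\exp(ED)$, so gradedness and membership in $\mathrm{E}$ are automatic and no lifting lemma is needed; the price is the small arithmetic verification that a kernel element $E=z^{i_0}\Delta^{j_0}$ of the right degree exists ($i_0\ge 0$ via the maximality of $\widehat{q}$), which plays the role of the paper's check that the lift has nonnegative $z$-exponents. The two outputs are essentially the same automorphism: restricting your derivation to $z=1$ gives $(v^{\widehat{q}}-u)^{j_0}\bigl(\widehat{q}v^{\widehat{q}-1}\partial_u+\partial_v\bigr)$, which is, up to the sign $(-1)^{j_0}$, the conjugate of $u^{\widehat{l}}\partial_v$ by $\tau$. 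All the individual steps check out (in particular $\gcd(\widehat{q},c)=1$, $k\ge 1$, $b\ge p$, the truncation of the exponential series, and the non-cancellation analysis forcing $\widehat{q}\,j_0(k-1)=k-1$ to fail for $k\ge 2$), so the proposal is a valid proof.
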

	\begin{proof}
		Let $\widehat{l}$ be the minimal $l$ such that the automorphism $(u, v + u^l)$ is $\widetilde{\varGamma}$-graded. If $\widehat{l} \geq c$, then the automorphism $(u, v + u^{\widehat{l} - c})$ also preserves the grading $\widetilde{\varGamma}$. Hence $\widehat{l} < c$. Consider the automorphisms~$\tau = (u + v^{\widehat{q}}, v)$ and $\phi = (u, v + u^{\widehat{l}})$. Let us show that  $\varepsilon=\tau^{-1}\circ\phi\circ\tau$ is a graded-wild automorphism. Let $I = (u,v) \lhd \K[u,v]$ be the ideal generated by $u$ and $v$. Then $$\varepsilon(u) = u + v^{\widehat{q}}-(v + (u + v^{\widehat{q}})^{\widehat{l}})^{\widehat{q}} = u - \widehat{q}v^{\widehat{q} - 1}u^{\widehat{l}} + F,$$
		where the polynomial $F \in I^{\widehat{l} + \widehat{q}}$.
		By Lemma \ref{ma} automorphism $\varepsilon$ can be lifted to an automorphism of the space and $\deg(v^{\widehat{q} - 1}u^{\widehat{l}}) = \widehat{q} - 1 + \widehat{l} < \widehat{q} + c$ so by Lemma \ref{ga} automorphism $\varepsilon$ is wild.
	\end{proof}
	\begin{remark}\label{co}
		It is easy to see that $\widehat{q} \geq 2$ if and only if there are integers $p \geq 1$ and $q\geq 2$ such as $a = qb + pc$.
	\end{remark}
	It remains to consider the cases when $ a, b $ or $ c $ can be equal to zero.
	\begin{lem}\label{zr}
		Suppose at least one number among $a$, $b$ and $c$ equals zero. Let $(a,b,c) \neq 0$. Then there are no graded-wild automorphisms.
	\end{lem}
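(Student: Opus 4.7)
The plan is to enumerate the possible zero patterns of $(a,b,c)$ given Remark \ref{pm}'s normalization $a\geq b\geq 0$, $\gcd(a,b,c)=1$, and in each case describe every graded automorphism explicitly. There are exactly four patterns to consider: $(1,0,0)$ and $(0,0,1)$ with two zero coordinates, and $(a,b,0)$ and $(a,0,-c)$ with one.

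For the ``two zeros'' patterns $(1,0,0)$ and $(0,0,1)$, the degree-$0$ subalgebra is a two-variable polynomial ring ($\K[y,z]$ or $\K[x,y]$ respectively) and any graded automorphism $\varphi$ restricts to an automorphism of this subalgebra. By Jung's Theorem \ref{Jung} the restriction decomposes into elementary and linear factors, each of which lifts to a graded elementary/linear automorphism of $\A$ since every polynomial in the two zero-degree variables is automatically $\varGamma$-homogeneous of degree $0$. The third variable has a one-dimensional graded component (its own scalar multiples), so by irreducibility $\varphi$ sends it to a scalar multiple of itself, providing an extra graded-linear factor.

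For $(a,b,0)$ with $a\geq b>0$ and $\gcd(a,b)=1$, the degree-$0$ subalgebra is $\K[z]$, so $\varphi(z)=\alpha z+\beta$ by irreducibility. When $b>1$, the degree-$a$ and degree-$b$ components are $x\cdot\K[z]$ and $y\cdot\K[z]$; irreducibility of $\varphi(x),\varphi(y)$ forces the $\K[z]$-factors to be scalars and $\varphi$ is affine. When $b=1<a$, degree-$a$ elements also include $y^a\cdot\K[z]$, and the Jacobian condition pins down $\varphi(x)=\lambda x+y^aq(z)$ and $\varphi(y)=\mu y$---a graded elementary composed with an affine map. For $a=b=1$, the linear-in-$(x,y)$ parts of $\varphi(x),\varphi(y)$ assemble into a matrix in $GL_2(\K[z])$; over the PID $\K[z]$ this group is generated by elementary and diagonal matrices, each corresponding to a graded elementary or graded linear automorphism of $\A$. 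For $(a,0,-c)$ with $\gcd(a,c)=1$ and $a,c>0$, the degree-$0$ subalgebra is $\K[y,x^cz^a]$, degree-$a$ elements have the form $x\cdot P(y,x^cz^a)$, and degree-$(-c)$ elements have the form $z\cdot Q(y,x^cz^a)$. Irreducibility forces $P,Q$ to be constants, yielding $\varphi(x)=\lambda x$ and $\varphi(z)=\mu z$, and then $\varphi$ descends to an automorphism of $\K[x,y,z]/(x,z)\cong\K[y]$, giving $\varphi(y)=\alpha y+\beta+x^cz^a g(y,x^cz^a)$---a composition of a graded-linear map, a constant-shift elementary on $y$, and the elementary that adds the $x^cz^a g$ term to $y$.

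The main obstacle is the $a=b=1$ subcase above, which depends on the external fact that $GL_2$ of a Euclidean domain is generated by elementary and diagonal matrices (proved via the Euclidean algorithm in $\K[z]$). The other delicacy is the irreducibility argument used throughout: in each case the graded component containing $\varphi(x)$ or $\varphi(z)$ is a free rank-one module over the degree-$0$ subalgebra, and an element of the form $x\cdot P$ with $P$ non-constant in $\K[x,y,z]$ is manifestly reducible, as $\K[x,y,z]$ is a UFD.
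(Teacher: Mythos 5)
Your overall strategy coincides with the paper's: enumerate the zero patterns under the normalization of Remark~\ref{pm}, use Jung's theorem for the two-zero patterns, the Euclidean algorithm in $GL_2(\K[z])$ for $(1,1,0)$, and irreducibility of images of variables (the content of Lemma~\ref{usl}) in the remaining cases. Most of your cases are sound, modulo one slip of wording: in the patterns $(1,0,0)$ and $(0,0,1)$ the graded component of the nonzero-degree variable is not one-dimensional (it is $x\cdot\K[y,z]$, resp.\ $z\cdot\K[x,y]$), but your irreducibility argument still yields $\varphi(x)=\lambda x$, resp.\ $\varphi(z)=\lambda z$, so nothing is lost.

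There is, however, a genuine gap in the pattern $(a,0,-c)$. Having correctly forced $\varphi(x)=\lambda x$ and $\varphi(z)=\mu z$, you write $\varphi(y)=\alpha y+\beta+x^cz^a\,g(y,x^cz^a)$ and call the last summand ``the elementary that adds the $x^cz^a g$ term to $y$.'' If $g$ depends on $y$, the map $(x,\;y+x^cz^a g(y,x^cz^a),\;z)$ is not an elementary automorphism (an elementary automorphism may only add a polynomial in the \emph{other} variables), and indeed need not be an automorphism at all: for instance $y\mapsto y+x^cz^a y = y(1+x^cz^a)$ passes your test of reducing to the identity modulo $(x,z)$ but is visibly not invertible. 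So reduction modulo $(x,z)$ only pins down the constant term of $\varphi(y)$ as a polynomial in $x^cz^a$; it does not exclude $y$-dependence in $g$. The missing step is exactly the argument of Lemma~\ref{usl} with the roles of $x$ and $y$ interchanged: write $\varphi(y)=y\,g_1(x,y,z)+g_0(x,z)$, compose with the graded elementary $(x,\,y-g_0,\,z)$ (graded because $g_0$ is $\varGamma$-homogeneous of degree $0$, hence a polynomial in $x^cz^a$), and conclude from irreducibility of $\varphi(y)-g_0=y g_1$ that $g_1$ is a nonzero constant. With that correction your case analysis closes and agrees with the paper's proof.
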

	\begin{proof}
		If $c = 0, a > b$, then $z \mapsto \lambda z, y \mapsto \mu y$, and hence, by Lemma \ref{usl}, all automorphisms are tame. If $a = b, c = 0$, then any automorphism of $\varphi$ has the form $\varphi = (A(z)x + B(z)y,\ C(z)x + D(z)y,\ \kappa z + \mu)$, and the matrix $\Lambda = \begin{pmatrix}
		A(z)&B(z)\\
		C(z)&D(z)
		\end{pmatrix}$ is non-degenerate for all $z$, so $\det\varLambda = A(z)D(z) - C(z)D(z) = \lambda \in \K^{\times}$. Hence the greatest common divisor $A(z)$ and $C(z)$ in the ring $\K[z]$ is equal to one. We apply Euclid's algorithm to $A(z)$ and $C(z)$:\\
		\begin{align*}
		A(z) &= C(z)q_0 + r_1,\\
		B(z) &= r_1q_1 + r_2,\\
		&\ldots\\
		r_{k-2} &= r_{k-1}q_{k-1} + r_k,\\
		&\ldots\\
		r_{n-2} &= r_{n-1}q_{n-1} + r_n,\\
		r_{n-1} &= r_nq_n,
		\end{align*}
		where $q_k, r_k \in \K[z]$.
		Now, by the hand-generated automorphism $(x,y,\kappa^{-1}z - \mu\kappa^{-1})$ we reduce $\varphi$ to the form $\varphi = (A(z)x + B(z)y,\ C(z)x + D(z)y,\  z)$. Then by composition of elementary automorphisms of the form $(x - yq_k, y, z), (x , y - xq_k, z)$ we get that $\varphi$ has the form $(x + \widetilde{B}(z)y, \widetilde{C}y, z)$. It is clear that this automorphism decomposes into a composition of elementary automorphisms.
		Now, if $b = 0, a \neq 0, c \neq 0$, then $x \mapsto \lambda_1x, z \mapsto \lambda_2z$, hence, from Lemma \ref{usl}, all graded automorphisms are graded-tame.\\
		Now let $a \neq 0, b = c = 0$. Then $x \mapsto \lambda x$; the images of the variables $y$ and $z$ for any automorphism do not depend on $x$, and hence, by Theorem \ref{Jung}, all graded automorphisms are graded-tame.\\
		In the case of strictly positive and strictly negative gradings, all graded automorphisms are graded-tame by Proposition \ref{tetriod}.
	\end{proof}
	
	\begin{proof}[Proof of Theorem \ref{mainmain}]
		By Proposition \ref{wa} and Remark \ref{co} grading $\varGamma$ admits graded-wild automorphisms if $(\deg_{\varGamma}(x), \deg_{\varGamma}(y), \deg_{\varGamma}(z)) = (a,b,-c)$, $a,b,c$ strictly positive and $a = qb + pc$ for some integers $p \geq 1,\ q\geq 2$.\\
		By Remark \ref{pm},
		Proposition \ref{re},
		Proposition \ref{sz},
		Lemma \ref{zr} other gradings doesn't admit graded-wild automorphisms if $(a,b,c) \neq 0$.\\
		Finally case $(\deg_{\varGamma}(x), \deg_{\varGamma}(y), \deg_{\varGamma}(z)) = (0,0,0)$ admits wild automorphisms, see~\cite{b2}.
	\end{proof}

\end{document}